\theoremstyle{plain}
\newtheorem{theorem}{Theorem}[section]
\newtheorem{lemma}[theorem]{Lemma}
\newtheorem{prop}[theorem]{Proposition}
\theoremstyle{definition}
\theoremstyle{remark}
\begin{document}

\title[Christoffel functions on the Unit Ball with
a Mass on the Sphere]{Asymptotic behaviour of the Christoffel functions on the Unit Ball in the presence
of a Mass on the Sphere}

\author[C. Mart\'{\i}nez]{Clotilde Mart\'{\i}nez}
\address[C. Mart\'{\i}nez]{{Departamento de Matem\'atica Aplicada\\
Universidad de Granada\\
18071 Granada, Spain}}\email{clotilde@ugr.es}
\author[M. A. Pi\~{n}ar]{Miguel A. Pi\~{n}ar}
\address[M. A. Pi\~{n}ar]{{Departamento de Matem\'atica Aplicada\\
Universidad de Granada\\
18071 Granada, Spain}}\email{mpinar@ugr.es}

\begin{abstract}
{ We present}   a family of mutually orthogonal polynomials on the unit ball with respect to an inner
product which includes a mass uniformly distributed on the sphere. First, connection formulas relating
these multivariate orthogonal polynomials and the classical ball polynomials are obtained. Then,
using the representation formula for these polynomials in terms of spherical harmonics
analytic properties will be deduced. {Finally, we analyze the asymptotic behaviour of the Christoffel functions.}
\end{abstract}

\subjclass[2000]{33C50, 42C10}

\keywords{Multivariate orthogonal polynomials, unit ball, Uvarov modification}

\maketitle

\section{Introduction}
\setcounter{equation}{0}

Classical orthogonal polynomials on the unit ball $\mathbb{B}^d$ of $\mathbb{R}^d$
correspond to the classical inner product
$$
  \langle f, g\rangle_\mu = \frac{1}{\omega_\mu}\int_{\mathbb{B}^d} f(x) g(x) W_\mu(x) dx,
$$
where $W_\mu(x) = (1-\|x\|^2)^\mu$ on $\mathbb{B}^d$, $\mu > -1$, and
$\omega_\mu$ is a normalizing constant such that $\langle 1 ,1 \rangle_\mu = 1$.

\bigskip

In the present paper, we consider orthogonal polynomials with respect to the inner product
$$
 \langle f ,g \rangle_{\mu}^{\lambda}
     =  \frac{1}{\omega_\mu} \int_{\mathbb{B}^d} f(x) g(x)  W_\mu(x)  dx +
    \frac{\lambda}{\sigma_{d-1}}  \int_{\mathbb{S}^{d-1}} f(\xi) g(\xi)d\sigma(\xi),
$$
where $\lambda>0$, $d \sigma$ is the surface measure on $\mathbb{S}^{d-1}$ and $\sigma_{d-1}$ denotes the sphere area.

\bigskip

Using spherical polar coordinates, we shall construct a family of mutually orthogonal polynomials
with respect to $ \langle \cdot ,\cdot \rangle_{\mu}^{\lambda}$, which depends
on a sequence of orthogonal polynomials of one variable, namely the Krall polynomials \cite{Krall1940}. 
This sequence of orthogonal polynomials can be expressed
in terms of Jacobi polynomials. In a previous paper (see \cite{MartinezPinar2016})
we have shown that the multivariate polynomials orthogonal with respect to 
the inner product $ \langle \cdot ,\cdot \rangle_{\mu}^{\lambda}$ satisfy a fourth order partial 
differential equation.

Standard techniques provide us explicit connection formulas relating classical multivariate
ball polynomials and our family of orthogonal polynomials. The explicit representations
for the norms and the kernels will be obtained.

\bigskip

A very interesting open problem in the theory of multivariate orthogonal polynomials is that of
finding asymptotic estimates for the Christoffel functions, because these estimates are related
to the study of the convergence of the Fourier series. Asymptotics for Christoffel functions associated to the
classical orthogonal polynomials on the ball were obtained by Y. Xu in 1996 (see \cite{Xu1996}).
Recently, more general results on the asymptotic behaviour of the Christoffel functions were
established by A. Kro\'o and D. Lubinsky \cite{KrooLubinsky2013A,KrooLubinsky2013B} 
in the context of universality. Those results include estimates in a  quite general case where 
the orthogonality measure satisfies some regularity conditions. In fact, they provide the 
ratio asymptotic for the Christoffel functions corresponding to two regular measures 
supported on the same compact set $D \in \mathbb{R}^d$, in particular, the ratio of the
Christoffel functions converges uniformly on any compact subset of the interior of $D$.

Since our orthogonal polynomials does not fit into the above mentioned situation, the asymptotic of the
Christoffel functions deserves special attention. Not surprisingly, our results show that
in any compact subset of the interior of the unit ball Christoffel functions 
behave exactly as in the classical case, see Theorem~\ref{th2}. On the sphere the situation
is quite different and we can perceive the influence of the mass $\lambda$, see Theorem~\ref{th1}.

A similar problem on a Sobelev context where the mass on the sphere was replaced by the normal derivatives has
been recently considered in \cite{DelgadoFLPP2016}.

\bigskip

The paper is organized as follows. In the next section, we state the background materials on orthogonal
polynomials on the unit ball and spherical harmonics that we will need later.
In Section 3, using spherical polar coordinates we construct explicitly a family
of mutually orthogonal polynomials with respect to $ \langle \cdot ,\cdot \rangle_{\mu}^{\lambda}$.
Those polynomials are given in terms of spherical harmonics and a family of univariate
orthogonal polynomials in the radial part, their properties are studied in Section 4.
In Section 5, we deduce explicit connection formulas relating classical multivariate
ball polynomials and our family of orthogonal polynomials.
Moreover, an explicit representation for the kernels is obtained. The asymptotic
behaviour of the corresponding Christoffel functions is studied in Section 6.

\section{Classical orthogonal polynomials on the ball}
\setcounter{equation}{0}

In this section we describe background materials on orthogonal polynomials and spherical harmonics. 
The first subsection collects some properties on the Jacobi polynomials. 
The second subsection recalls the basic results on spherical harmonics
and classical orthogonal polynomials on the unit ball.

\bigskip

\subsection{Classical Jacobi polynomials}

For $\alpha, \beta > -1$, Jacobi polynomials $P_n^{(\alpha,\beta)}(t)$ \cite{Szego1975} are orthogonal 
with respect to the Jacobi inner product
$$
\left(f,g\right)_{\alpha,\beta}=\int_{-1}^1f(t)\, g(t)\, (1-t)^\alpha(1+t)^{\beta} dt.
$$
and satisfy
\begin{equation} \label{jac-norm}
P_n^{(\alpha,\beta)}(1) = \binom{n+\alpha}{n} = \frac{(\alpha+1)_n}{n!}.
\end{equation}
The squares of the $L^2$ norms are given by
\begin{equation}\label{normJ}
h_n^{(\alpha,\beta)} :=  \left(P_{n}^{(\alpha, \beta)},P_{n}^{(\alpha, \beta)}\right)_{\alpha,\beta}
= \frac{2^{\alpha+\beta+1}\,\Gamma(n+\alpha+1)\,\Gamma(n+\beta+1)}{(2n+\alpha+\beta+1)\,n!\,
\Gamma(n+\alpha+\beta+1)}.
\end{equation}
Furthermore, to Jacobi polynomials we will use the corresponding kernel polynomials
defined as
\begin{equation*}\label{KernelJac}
K_n^{(\alpha,\beta)}(t,u) = \sum_{k=0}^n \, \frac{P_k^{(\alpha,\beta)}(t)\,
P_k^{(\alpha,\beta)}(u)}{h_k^{(\alpha,\beta)}},
\end{equation*}
which are symmetric functions. 
It is well known (see \cite[p. 71]{Szego1975}) that
\begin{align}
K_{n}^{(\alpha,\beta)}(t,1) &= \frac{2^{-\alpha-\beta-1}}{\Gamma(\alpha+1)}\frac{\Gamma(n+\alpha+\beta+2)}{\Gamma(n+\beta+1)}\,
P_{n}^{(\alpha+1,\beta)}(t), \label{K(t,1)}\\
K_{n}^{(\alpha,\beta)}(1,1) &= \frac{2^{-\alpha-\beta-1}}{\Gamma(\alpha+1)}
\frac{\Gamma(n+\alpha+\beta+2)}{\Gamma(n+\beta+1)}\frac{\Gamma(n+\alpha+2)}{\Gamma(n+1)\,
\Gamma(\alpha+2)}.  \label{K(1,1)}
\end{align}

\bigskip

\subsection{Orthogonal polynomials on the unit ball and spherical harmonics}

Let $\Pi^d$ be the space of polynomials in $d$ real variables. For a given
non negative integer $n$,
let $\Pi_n^d$ denote the linear space of polynomials in several variables of (total) degree
at most $n$ and let $\mathcal{P}_n^d$ be the space of homogeneous polynomials of degree
$n$. 

The unit ball and the unit sphere in $\mathbb{R}^d$ are
denoted, respectively, by
$$
\mathbb{B}^d =\{x\in \mathbb{R}^d: \|x\| \leqslant 1\} \qquad \textrm{and} \qquad 
\mathbb{S}^{d-1}=\{\xi\in \mathbb{R}^d: \|\xi\| = 1\}.
$$
where $\|x\|$ denotes as usual the Euclidean norm of $x$.

For $\mu \in \mathbb{R}$, the weight function $W_\mu(x) = (1-\|x\|^2)^{\mu}$ is integrable on the unit ball if $\mu > -1$. 
Consider the inner product
$$
   \langle f,g \rangle_\mu = \frac{1}{\omega_\mu} \int_{{\mathbb{B}^d}}\, f(x)\, g(x) \, W_\mu(x) \, dx,
$$
where  ${\omega_\mu}$ is the normalization constant of $W_\mu$ given by

\begin{equation*}\label{omegamu}
\omega_\mu := \int_{{\mathbb{B}^d}}\, W_\mu(x) \, dx = \frac{\pi^{d/2}\Gamma(\mu+1)}{\Gamma(\mu + 1 + d/2)}.
\end{equation*}
 

A polynomial $P \in \Pi_n^d$ is called \textbf{orthogonal} with respect to $W_\mu$ on the ball if
$\langle P, Q\rangle_\mu =0$ for all $Q \in \Pi_{n-1}^d$, that is, if it is orthogonal to all polynomials
of lower {degree}. Let $\mathcal{V} _n^d(W_\mu)$ denote the space of orthogonal polynomials of total
degree $n$ with respect to $W_\mu$. It is well known that
\[
\dim \Pi_n^d = \binom{n+d}{d} \quad \textrm{and} \quad  
\dim \mathcal{V} _n^d(W_\mu) = \binom{n+d-1}{n}.
\]

For $n\ge 0$, let $\{P^n_{\nu}(x) : |\nu|=n\}$ denote a basis of $\mathcal{V}_n^d(W_\mu)$.
Notice that every element of $\mathcal{V} _n^d(W_\mu)$ is orthogonal to polynomials of lower degree. If the
elements of the basis are also orthogonal to each other, that is,
$\langle P_\nu^n, P_\eta^n \rangle_\mu=0$ whenever $\nu \ne \eta$,
we call the basis \textbf{mutually orthogonal}. If, in addition,
$\langle P_\nu^n, P_\nu^n \rangle_\mu =1$, we call the basis \textbf{orthonormal}.

\bigskip

Harmonic polynomials of degree $n$ in $d$--variables are polynomials in $\mathcal{P} _n^d$ satisfying
the Laplace equation $\Delta Y = 0$, where
$$
\Delta = \frac{\partial^2}{\partial x_1^2} + \ldots + \frac{\partial^2}{\partial x_d^2}
$$
is the usual Laplace operator.

Let $\mathcal{H}_n^d$ be the space of harmonic polynomials
of degree $n$. It is well known that
$$
         a_n^d: = \dim \mathcal{H}_n^d = \binom{n+d-1}{d-1} - \binom{n+d-3}{d-1}.
$$
Spherical harmonics are the restriction of harmonic polynomials to the unit sphere. If $Y \in
\mathcal{H}_n^d$, {then} in spherical--polar {coordinates}
$x = r \xi$, $ r \geqslant 0$ and $\xi \in \mathbb{S}^{d-1}$, we get 
\[
Y(x) = r^n Y(\xi),
\] 
so that $Y$ is uniquely determined by its restriction to the sphere. We shall also use 
$\mathcal{H}_n^d$ to denote the space of spherical harmonics of degree $n$.

\bigskip

If $d \sigma$ denotes the surface measure then the surface area is given by 
\begin{equation*} \label{sigma_d}
  \sigma_{d-1} := \int_{\mathbb{S}^{d-1}} d\sigma = \frac{2\, \pi^{d/2}}{\Gamma(d/2)}.
\end{equation*}
Spherical harmonics of different degrees are orthogonal with respect to the inner product
$$
   \langle f, g \rangle_{\mathbb{S}^{d-1}} = \frac{1}{\sigma_{d-1}} \int_{\mathbb{S}^{d-1}} f(\xi) g(\xi) d\sigma(\xi). 
$$

\bigskip

Since the weight function $W_\mu$ is rotationally invariant, 
in spherical--polar coordinates $x = r \xi$, $ r \geqslant 0$ and $\xi \in \mathbb{S}^{d-1} $, 
a mutually orthogonal basis of $\mathcal{V}_n^d(W_\mu)$ can be shown in terms of Jacobi 
polynomials and spherical harmonics (see, for instance, \cite{DunklXu2014}).

\begin{prop} \label{baseP}
For $n \in \mathbb{N}_0$ and $0 \le j \le n/2$, let $\{Y_\nu^{n-2j}: 1\le \nu\le a_{n-2j}^d\}$ be
an orthonormal basis for $\mathcal{H}_{n-2j}^d$. Let us denote $\beta_{n-2j} = n-2j + \frac{d-2}{2}$ 
and define 
$$
P_{j,\nu}^{n}(x) := P_{j}^{(\mu,  \beta_{n-2j})}(2\,\|x\|^2 -1)\, Y_\nu^{n-2j}(x). 
$$
Then the set $\{P_{j,\nu}^{n}(x): 1 \le j \le n/2, \,1 \le \nu \le a_{n-2j}^d \}$ is a mutually
orthogonal basis of $\mathcal{V}_n^d(W_\mu)$. More precisely, 
$$
\langle P_{j,\nu}^{n}(x), P_{k,\eta}^{m}(x)\rangle_\mu =  H_{j,\nu}^{n,\mu}  \delta_{n,m}\,\delta_{j,k}\,\delta_{\nu,\eta},
$$
where $ H_{j,\nu}^{n,\mu}$ is given by 
$$ 
 H_{j,\nu}^{n,\mu}: = \frac{(\mu +1)_j (\frac{d}{2})_{n-j} (n-j+\mu+ \frac{d}{2})}
    { j! (\mu+\frac{d}{2}+1)_{n-j} (n+\mu+ \frac{d}{2})} =  \frac{c_\mu^d}{2^{n-2j}} h_j^{(\mu,\beta_{n-2j})},
$$ 
with 
$$
c_\mu^d = \frac{1}{2^{\mu+\frac{d}{2}+1}} \frac{\sigma_{d-1}}{\omega_\mu}.
$$
\end{prop}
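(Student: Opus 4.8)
The plan is to exploit the rotational invariance of $W_\mu$ by passing to spherical--polar coordinates $x = r\xi$ with $r \in [0,1]$ and $\xi \in \mathbb{S}^{d-1}$, so that $dx = r^{d-1}\,dr\,d\sigma(\xi)$. Using the homogeneity of harmonic polynomials, $Y_\nu^{n-2j}(x) = r^{n-2j}\,Y_\nu^{n-2j}(\xi)$, each basis element factorizes as
$$
P_{j,\nu}^n(r\xi) = P_j^{(\mu,\beta_{n-2j})}(2r^2-1)\,r^{n-2j}\,Y_\nu^{n-2j}(\xi).
$$
Substituting this into $\langle P_{j,\nu}^n, P_{k,\eta}^m\rangle_\mu$, the weight $W_\mu(r\xi)=(1-r^2)^\mu$ depends only on $r$, so the integral separates into a product of a radial integral in $r$ and an angular integral over $\mathbb{S}^{d-1}$.

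First I would handle the angular factor. By the orthogonality of spherical harmonics of different degrees, together with the orthonormality of the chosen basis within each $\mathcal{H}_\ell^d$, the angular integral equals $\delta_{n-2j,\,m-2k}\,\delta_{\nu,\eta}$. In particular the whole inner product vanishes unless $n-2j = m-2k =: \ell$, in which case the Jacobi parameters coincide, $\beta_{n-2j} = \beta_{m-2k} = \beta_\ell = \ell + \tfrac{d-2}{2}$, so that the two radial polynomials are orthogonal with respect to the same Jacobi weight.

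Next I would evaluate the radial factor under the assumption $n-2j = m-2k = \ell$. The change of variables $t = 2r^2 - 1$ maps $[0,1]$ onto $[-1,1]$ and, after collecting the powers of $r$ (namely $r^{2\ell}$ from the two spherical harmonics times the Jacobian factor $r^{d-1}$) together with $(1-r^2)^\mu$, turns the radial integral into a constant multiple of the Jacobi inner product $\bigl(P_j^{(\mu,\beta_\ell)},P_k^{(\mu,\beta_\ell)}\bigr)_{\mu,\beta_\ell}$. By Jacobi orthogonality this produces the factor $\delta_{j,k}$ and the norm $h_j^{(\mu,\beta_\ell)}$ from \eqref{normJ}. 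Since $j=k$ and $n-2j=m-2k$ force $n=m$, the three Kronecker deltas combine into $\delta_{n,m}\,\delta_{j,k}\,\delta_{\nu,\eta}$, which establishes the mutual orthogonality and, since the $P_{j,\nu}^n$ of degree $n$ are exactly $\dim\mathcal{V}_n^d(W_\mu)$ in number, the basis property.

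Finally, I would verify the explicit value of the norm $H_{j,\nu}^{n,\mu}$. The main obstacle here is purely one of bookkeeping: one must track faithfully every power of $2$ arising from the substitution $t = 2r^2-1$ (the Jacobian, the factor $(\tfrac{t+1}{2})^{\beta_\ell}$ from $r^{2\ell+d-2}$, and $(\tfrac{1-t}{2})^\mu$). Carrying this out gives the compact expression $H_{j,\nu}^{n,\mu} = \tfrac{c_\mu^d}{2^{\,n-2j}}\,h_j^{(\mu,\beta_{n-2j})}$ with $c_\mu^d = \tfrac{1}{2^{\mu+d/2+1}}\tfrac{\sigma_{d-1}}{\omega_\mu}$. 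The equivalent Pochhammer form then follows by inserting the closed expressions for $h_j^{(\mu,\beta_{n-2j})}$, $\sigma_{d-1}$ and $\omega_\mu$ and simplifying the resulting ratio of Gamma functions; this last step is routine but requires care with the shifted arguments $\mu+\tfrac{d}{2}$.
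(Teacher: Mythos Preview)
Your proposal is correct and follows the standard approach: the paper does not actually prove this proposition but cites \cite{DunklXu2014} for it, and the computation you outline (polar coordinates via \eqref{changevar}, orthogonality of spherical harmonics to kill the angular part, then the substitution $t=2r^2-1$ to reduce the radial integral to a Jacobi inner product) is precisely the one the paper carries out in detail for the analogous Theorem~\ref{thm-3.1}. The bookkeeping of powers of $2$ and the derivation of the Pochhammer form of $H_{j,\nu}^{n,\mu}$ are routine as you indicate.
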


\bigskip

\section{An inner product on the unit ball with an extra spherical term} 
\setcounter{equation}{0}

Let us define the inner product
\begin{equation*} \label{main-ip}
 \langle f ,g \rangle_{\mu}^{\lambda}  =  \frac{1}{\omega_\mu} \int_{\mathbb{B}^d} f(x) g(x)  W_\mu(x)  dx +
    \frac{\lambda}{\sigma_{d-1}}  \int_{\mathbb{S}^{d-1}} f(\xi) g(\xi) d\sigma,
\end{equation*}  
where $\lambda>0$.
As a consequence of the central symmetry of the inner product, we can use a procedure
analogous to the construction described in Proposition \ref{baseP} to obtain a basis of $\mathcal{V}_n^d(W_\mu,\lambda)$, the linear 
space of orthogonal polynomials of exact degree $n$ with respect to 
$\langle \cdot, \cdot \rangle_{\mu}^{\lambda}$. This time,
the radial parts constitute a sequence of polynomials in one variable related to Jacobi polynomials.  

\begin{theorem} \label{thm-3.1}
For $n \in \mathbb{N}_0$ and $0 \le j \le n/2$, let $\{Y_\nu^{n-2j}: 1\le \nu\le a_{n-2j}^d\}$ be
an orthonormal basis for $\mathcal{H}_{n-2j}^d$.
Let us denote 
$$
M_{n-2j} = \lambda \frac{2^{n-2j}}{c_{\mu}^d}.
$$
Let $q_{j}^{(\mu, \beta_{n-2j},M_{n-2j})}(t)$ be the $j$-th orthogonal polynomial with respect to
\begin{equation} \label{eq:op1d}
(f,g)_{\mu,\beta_{n-2j}}^{M_{n-2j}} = 
\int_{-1}^1 f(t)g(t)(1-t)^{\mu} (1+t)^{\beta_{n-2j}} dt + M_{n-2j} f(1) g(1), 
\end{equation}
and having the same leading coefficient as the Jacobi polynomial $P_j^{(\mu, \beta_{n-2j})}(t)$.
Then the polynomials
\[
Q_{j,\nu}^{n}(x) = q_{j}^{(\mu, \beta_{n-2j},M_{n-2j})}(2\,\|x\|^2 -1)\, Y_\nu^{n-2j}(x),
\]
with $1 \le j \le n/2, \,1 \le \nu \le a_{n-2j}^d$ constitute a \textbf{mutually
orthogonal basis of $\mathcal{V}_n^d(W_\mu,\lambda)$}. That is,
$$
\langle Q_{j,\nu}^{n}(x), Q_{k,\eta}^{m}(x)\rangle_{\mu}^{\lambda} =  \tilde{H}_{j,\nu}^{n}  \delta_{n,m}\,\delta_{j,k}\,\delta_{\nu,\eta},
$$
where $  \tilde{H}_{j,\nu}^{n}$ is given by 
$$ 
  \tilde{H}_{j,\nu}^{n}: =  \frac{c_\mu^d}{2^{n-2j}}  \tilde{h}_j^{(\mu,\beta_{n-2j},M_{n-2j})},
$$ 
with 
$$
\tilde{h}_j^{(\mu,\beta_{n-2j},M_{n-2j})} = (q_{j}^{(\mu, \beta_{n-2j},M_{n-2j})},
q_{j}^{(\mu, \beta_{n-2j},M_{n-2j})})_{\mu,\beta_{n-2j}}^{M_{n-2j}}.
$$
\end{theorem}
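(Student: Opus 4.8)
The plan is to compute the inner product $\langle Q_{j,\nu}^{n}, Q_{k,\eta}^{m}\rangle_{\mu}^{\lambda}$ directly in spherical--polar coordinates $x = r\xi$, following the derivation behind Proposition~\ref{baseP}, the only new feature being the boundary term. First I would invoke homogeneity, $Y_\nu^{n-2j}(x) = r^{n-2j}\,Y_\nu^{n-2j}(\xi)$, and decompose $dx = r^{d-1}\,dr\,d\sigma(\xi)$, so that both the solid integral over $\mathbb{B}^d$ and the surface integral over $\mathbb{S}^{d-1}$ split into a radial integral times an angular integral. Since the $Y_\nu^{n-2j}$ are orthonormal and spherical harmonics of distinct degrees are orthogonal, the angular factor in \emph{both} terms is $\sigma_{d-1}\,\delta_{n-2j,\,m-2k}\,\delta_{\nu,\eta}$. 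This reduces the problem to the case $n-2j = m-2k =: \ell$ with $\nu = \eta$, leaving only radial integrals; here $\beta_\ell := \ell + \tfrac{d-2}{2} = \beta_{n-2j}$ and the two radial polynomials both belong to the single family $q_\cdot^{(\mu,\beta_\ell,M_\ell)}$.

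Next I would handle the radial parts. In the solid term the radial integral is $\int_0^1 q_j(2r^2-1)\,q_k(2r^2-1)\,r^{2\ell+d-1}(1-r^2)^\mu\,dr$, and the substitution $t = 2r^2-1$ (so $1-r^2 = \tfrac{1-t}{2}$, $r^2 = \tfrac{1+t}{2}$, $r\,dr = \tfrac14\,dt$) converts it into $\int_{-1}^1 q_j(t)q_k(t)(1-t)^\mu(1+t)^{\beta_\ell}\,dt$ up to an explicit power of $2$. In the surface term the radial variable is frozen at $r=1$, i.e.\ $t=1$, contributing $q_j(1)q_k(1)$. Collecting the prefactors $\tfrac1{\omega_\mu}$ and $\tfrac{\lambda}{\sigma_{d-1}}$ together with the Jacobian powers and using $c_\mu^d = 2^{-\mu-d/2-1}\sigma_{d-1}/\omega_\mu$ and $M_\ell = \lambda\,2^\ell/c_\mu^d$, I expect everything to assemble into
$$
\langle Q_{j,\nu}^{n}, Q_{k,\eta}^{m}\rangle_{\mu}^{\lambda}
 = \frac{c_\mu^d}{2^{\ell}}\,\delta_{n-2j,\,m-2k}\,\delta_{\nu,\eta}
   \Big[\int_{-1}^1 q_j(t)\,q_k(t)\,(1-t)^\mu(1+t)^{\beta_\ell}\,dt + M_\ell\,q_j(1)\,q_k(1)\Big],
$$
where the bracket is exactly the one-dimensional modified inner product $(q_j,q_k)_{\mu,\beta_\ell}^{M_\ell}$ of \eqref{eq:op1d}.

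From here the conclusion is immediate. Because $\ell = n-2j = m-2k$ forces both radial factors into the same family, their defining orthogonality gives $(q_j,q_k)_{\mu,\beta_\ell}^{M_\ell} = 0$ unless $j=k$, and $j=k$ together with $n-2j = m-2k$ forces $n=m$; this produces the full $\delta_{n,m}\delta_{j,k}\delta_{\nu,\eta}$ and identifies the norm as $\tilde H_{j,\nu}^{n} = \tfrac{c_\mu^d}{2^{n-2j}}\,\tilde h_j^{(\mu,\beta_{n-2j},M_{n-2j})}$. To promote mutual orthogonality to a basis, I would note that $\langle\cdot,\cdot\rangle_\mu^\lambda$ is positive definite on $\Pi^d$ (already its solid part is), so each $\tilde h_j > 0$ and the $Q_{j,\nu}^{n}$ are nonzero and linearly independent; the orthogonality just established against all lower-degree $Q_{k,\eta}^{m}$ shows each $Q_{j,\nu}^{n}$ is orthogonal to $\Pi_{n-1}^d$, and the dimension count $\sum_{0\le j\le n/2} a_{n-2j}^d = \binom{n+d-1}{n} = \dim\mathcal{V}_n^d(W_\mu,\lambda)$ (the same as in the classical case, since positive definiteness makes $\mathcal{V}_n^d$ the orthogonal complement of $\Pi_{n-1}^d$ in $\Pi_n^d$) shows they span $\mathcal{V}_n^d(W_\mu,\lambda)$.

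I expect the main obstacle to be the careful bookkeeping of constants through the substitution $t=2r^2-1$. The entire content of the theorem is that the spherical mass $\lambda$, after the radial change of variables, rescales precisely into the point mass $M_\ell = \lambda\,2^\ell/c_\mu^d$ at $t=1$, so that the solid-plus-surface inner product collapses onto the single univariate Uvarov inner product \eqref{eq:op1d}. Matching the $\tfrac1{\omega_\mu}$ and $\tfrac{\lambda}{\sigma_{d-1}}$ normalizations against $c_\mu^d$ is where any stray factor of $2$ would surface, and verifying this exact cancellation is the real work; everything else is a transcription of the argument behind Proposition~\ref{baseP} with the extra boundary evaluation carried along.
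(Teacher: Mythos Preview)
Your proposal is correct and follows essentially the same route as the paper: pass to spherical--polar coordinates, use the orthogonality of the spherical harmonics to split off the angular factor $\delta_{n-2j,m-2k}\,\delta_{\nu,\eta}$ in both the solid and surface terms, then apply the substitution $t=2r^2-1$ to recast the radial part as the one-dimensional Uvarov inner product \eqref{eq:op1d}. Your added justification via the dimension count that the $Q_{j,\nu}^n$ actually form a basis of $\mathcal{V}_n^d(W_\mu,\lambda)$ is a detail the paper leaves implicit, but otherwise the arguments coincide.
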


\begin{proof}
The proof of this theorem uses the following well known identity
\begin{equation}\label{changevar}
        \int_{\mathbb{B}^d} f(x) dx = \int_0^1 r^{d-1}\int_{\mathbb{S}^{d-1}}f(r\,\xi)\, d\sigma (\xi)\,dr
\end{equation}
that arises from the spherical--polar coordinates $x=r\,\xi$,$r=\|x\|$, $\xi\in \mathbb{S}^{d-1}$.

In order to check the orthogonality, we need to compute the product
\begin{equation} \label{pr1}
\langle Q_{j,\nu}^n, Q_{k,\eta}^m \rangle_\mu^{\lambda} = \frac{1}{\omega_\mu} \int_{\mathbb{B}^d} Q_{j,\nu}^n(x) Q_{k,\eta}^m(x) W_\mu(x)\, dx + \frac{\lambda}{\sigma_{d-1}} \int_{\mathbb{S}^{d-1}} Q_{j,\nu}^n(\xi) Q_{k,\eta}^m(\xi) \, d\sigma(\xi). 
\end{equation}

Let us start with the computation of the first integral.
\[
I_1=\frac{1}{\omega_\mu} \int_{\mathbb{B}^d} Q_{j,\nu}^n(x) Q_{k,\eta}^m(x) W_\mu(x)\, dx.
\]
To simplify our notations, we will write $q_{j}^{(\mu, \beta_{n-2j},M_{n-2j})} = q_{j}^{({n-2j})}$. Using polar coordinates, relation \eqref{changevar}, and the orthogonality of the
spherical harmonics we obtain
\begin{align*}
I_1 =& \frac{\sigma_{d-1}}{\omega_\mu} \int_0^1 q_{j}^{({n-2j})}(2r^2-1) q_{k}^{({m-2k})}(2r^2-1)(1-r^2)^{\mu}\,  r^{n-2j+m-2k+d-1}  \, dr
\\
& \times \, \delta_{n-2j,m-2k} \delta_{\nu\eta}
\\
=& \frac{\sigma_{d-1}}{\omega_\mu} \int_0^1 q_{j}^{({n-2j})}(2r^2-1) q_{k}^{({m-2k})}(2r^2-1) (1-r^2)^{\mu}\,  r^{2(n-2j)+d-1} \, dr
\\
& \times \, \delta_{n-2j,m-2k} \delta_{\nu\eta}.
\end{align*}
Finally, the change of variables $t=2r^2-1$ moves the integral to the interval $[-1,1]$,
\begin{align}  \label{i1}
I_1 =& \frac{c_\mu^d}{2^{n-2j}}  \int_{-1}^1  q_{j}^{({n-2j})}(t)  q_{k}^{({n-2j})}(t) (1-t)^{\mu} (1+t)^{\beta_{n-2j}}  dt \, \delta_{n-2j,m-2k} \delta_{\nu\eta}. 
\end{align}
For the second integral in \eqref{pr1} we get
\begin{align} \label{i2}
I_2 =& \frac{\lambda}{\sigma_{d-1}}  \int_{\mathbb{S}^{d-1}} Q^n_{j,\nu}(\xi) Q^m_{k,\eta}(\xi) d\sigma(\xi)
\\
=& \frac{\lambda}{\sigma_{d-1}}
q_{j}^{({n-2j})}(1) q_{k}^{({m-2k})}(1)
\, \int_{\mathbb{S}^{d-1}} Y^{n-2j}_{\nu}(\xi) Y^{m-2k}_{\eta}(\xi) d\sigma(\xi) \notag \\ 
=& \lambda q_{j}^{({n-2j})}(1) q_{k}^{({m-2k})}(1)
\, \delta_{n-2j,m-2k} \, \delta_{\nu,\eta}. \notag
\end{align}

To end the proof, we just have to put together \eqref{i1} and \eqref{i2} to get the value of \eqref{pr1}
in terms of the inner product \eqref{eq:op1d} as
\begin{align*}
\langle Q_{j,\nu}^n, Q_{k,\eta}^m \rangle_\mu^\lambda &= \frac{c_\mu^d}{2^{n-2j}} 
\left(q_j^{(\mu,\beta_{n-2j},M_{n-2j})},\,q_k^{(\mu,\beta_{n-2j},M_{n-2j})}\right)_{\mu,\beta_{n-2j}}^{M_{n-2j}} \\
& \times \delta_{n-2j,m-2k} \, \delta_{\nu,\eta}.
\end{align*}
And the result follows from the orthogonality of the polynomial
$q_j^{(\mu,\beta_{n-2j},M_{n-2j})}$.
\end{proof}

\bigskip

\section{The Uvarov modification of Jacobi polynomials}
\setcounter{equation}{0}

In this section we will consider the study of several properties of the univariate orthogonal polynomials involved
in (\ref{eq:op1d}).

Let $(\cdot,\cdot)_{\alpha,\beta}^M$ be the inner product defined in (\ref{eq:op1d})
\begin{equation} \label{M-ip}
(f,g)_{\alpha,\beta}^M = \int_{-1}^1 f(t)\, g(t) (1-t)^\alpha (1+t)^\beta dt + M f(1) g(1)
\end{equation}
where $M$ is a positive real number.
Let $\{q_k^{(\alpha, \beta;M)}(t)\}_{k\geqslant 0}$ be the orthogonal polynomials with respect to \eqref{M-ip}  
having the same leading coefficient as the Jacobi polynomial
$P_k^{(\alpha, \beta)}$, and denote by  $\tilde{K}_{k}^{(\alpha, \beta, M)}(t,u)$
the corresponding kernels.

\bigskip

Following Uvarov (\cite{Uvarov1969}) these univariate orthogonal polynomials can be expressed in terms of the classical Jacobi polynomials as the first identity in the following lemma shows. Some of these 
properties are very well known (see \cite[p. 131]{Nevai1979}) but we include here a sketch of the
proof for the sake of completeness.

\begin{lemma}\label{Uvarov}
For $\alpha,\beta>-1$, it holds

\begin{align}
\label{M-polynomials}
&q_{k}^{(\alpha, \beta, M)}(t) = P_{k}^{(\alpha, \beta)}(t) - \dfrac{M P_{k}^{(\alpha, \beta)}(1)}{1 + M K_{k-1}^{(\alpha, \beta)}(1,1)} K_{k-1}^{(\alpha, \beta)}(1,t), \\
\label{M-norms}
&{\tilde h}_k^{(\alpha,\beta,M)} := (q_{k}^{(\alpha, \beta, M)},q_{k}^{(\alpha, \beta, M)})_{\alpha,\beta}^{M} =
h_k^{(\alpha,\beta)} \dfrac{1 + M K_{k}^{(\alpha, \beta)}(1,1)}{1 + M K_{k-1}^{(\alpha, \beta)}(1,1)},\\
\label{M-kernels} 
&\tilde{K}_{k}^{(\alpha, \beta, M)}(t,u) = K_{k}^{(\alpha, \beta)}(t,u) - \dfrac{M K_{k}^{(\alpha, \beta)}(1,t)K_{k}^{(\alpha, \beta)}(1,u)}{1 + M K_{k}^{(\alpha, \beta)}(1,1)}, 
\end{align}
In particular
\begin{align} \label{M-kernels-one}
&\tilde{K}_{k}^{(\alpha, \beta, M)}(1,1) = \dfrac{ K_{k}^{(\alpha, \beta)}(1,1)}{1 + M K_{k}^{(\alpha, \beta)}(1,1)}.
\end{align}

\end{lemma}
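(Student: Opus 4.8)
The plan is to establish the four identities \eqref{M-polynomials}--\eqref{M-kernels-one} in sequence, using the Uvarov modification machinery, since each formula feeds into the next. The key conceptual point is that the inner product \eqref{M-ip} differs from the Jacobi inner product only by the rank-one perturbation $M f(1) g(1)$, so the modified orthogonal polynomial $q_k^{(\alpha,\beta,M)}$ should be expressible as the Jacobi polynomial $P_k^{(\alpha,\beta)}$ corrected by a multiple of the Jacobi reproducing kernel $K_{k-1}^{(\alpha,\beta)}(1,t)$, which captures exactly the subspace of polynomials of degree at most $k-1$ evaluated against the point mass at $t=1$.

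First I would prove \eqref{M-polynomials}. Since $q_k^{(\alpha,\beta,M)}$ and $P_k^{(\alpha,\beta)}$ share the same leading coefficient, their difference lies in $\Pi_{k-1}$ (the univariate polynomials of degree at most $k-1$), so I can write $q_k^{(\alpha,\beta,M)}(t) = P_k^{(\alpha,\beta)}(t) + R(t)$ with $\deg R \le k-1$. Expanding $R$ in the Jacobi basis and imposing $(q_k^{(\alpha,\beta,M)}, P_j^{(\alpha,\beta)})_{\alpha,\beta}^{M} = 0$ for $0 \le j \le k-1$, the only surviving contributions come from the mass term, because $P_k^{(\alpha,\beta)}$ is already $(\cdot,\cdot)_{\alpha,\beta}$-orthogonal to lower-degree polynomials. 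This forces $R(t) = -c\, K_{k-1}^{(\alpha,\beta)}(1,t)$ for a scalar $c$; evaluating the resulting linear condition at $t=1$ and solving for $c$ yields the stated coefficient $c = M P_k^{(\alpha,\beta)}(1)/(1 + M K_{k-1}^{(\alpha,\beta)}(1,1))$, using the reproducing property $(f, K_{k-1}^{(\alpha,\beta)}(1,\cdot))_{\alpha,\beta} = f(1)$ for $f \in \Pi_{k-1}$.

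Next I would derive the norm formula \eqref{M-norms} by computing $(q_k^{(\alpha,\beta,M)}, q_k^{(\alpha,\beta,M)})_{\alpha,\beta}^M$. Orthogonality lets me replace the second factor by $P_k^{(\alpha,\beta)}$, so the quantity reduces to $(q_k^{(\alpha,\beta,M)}, P_k^{(\alpha,\beta)})_{\alpha,\beta}^M$; substituting \eqref{M-polynomials} and using the reproducing property together with \eqref{jac-norm} and \eqref{K(1,1)} gives the ratio form after algebraic simplification (the key identity being $K_k^{(\alpha,\beta)}(1,1) - K_{k-1}^{(\alpha,\beta)}(1,1) = P_k^{(\alpha,\beta)}(1)^2/h_k^{(\alpha,\beta)}$). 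For the kernel identity \eqref{M-kernels}, I would observe that $\{q_j^{(\alpha,\beta,M)}\}$ is orthogonal with respect to \eqref{M-ip}, expand $\tilde K_k$ as a Christoffel--Darboux-type sum $\sum_{j=0}^k q_j^{(\alpha,\beta,M)}(t) q_j^{(\alpha,\beta,M)}(u)/\tilde h_j$, and show it equals the claimed expression; alternatively, and more cleanly, I would verify directly that the right-hand side reproduces polynomials under $(\cdot,\cdot)_{\alpha,\beta}^M$, since the reproducing kernel is unique. Finally, \eqref{M-kernels-one} is the specialization $t=u=1$ of \eqref{M-kernels}, which collapses immediately to the stated quotient.

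The main obstacle I anticipate is not any single step but the bookkeeping in \eqref{M-norms} and \eqref{M-kernels}: one must repeatedly invoke the reproducing property of $K_{k-1}^{(\alpha,\beta)}$ and the telescoping relation between $K_k^{(\alpha,\beta)}(1,1)$ and $K_{k-1}^{(\alpha,\beta)}(1,1)$, and keep careful track of which inner product (Jacobi versus modified) is in play at each evaluation, since $q_k^{(\alpha,\beta,M)}$ is orthogonal under $(\cdot,\cdot)_{\alpha,\beta}^M$ but is built from objects orthogonal under $(\cdot,\cdot)_{\alpha,\beta}$. The cleanest route to \eqref{M-kernels} is the reproducing-kernel uniqueness argument, which sidesteps an explicit resummation of the $q_j$.
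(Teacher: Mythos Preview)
Your proposal is correct and follows the paper's argument almost verbatim for \eqref{M-polynomials}, \eqref{M-norms}, and \eqref{M-kernels-one}: the paper likewise expands $q_k$ in the Jacobi basis, extracts the coefficients from the mass term to get $q_k(t)=P_k(t)-M q_k(1)K_{k-1}(1,t)$, evaluates at $t=1$ to solve for $q_k(1)$, and then computes $\tilde h_k=(q_k,P_k)_{\alpha,\beta}^M$ exactly as you describe (one small remark: the references to \eqref{jac-norm} and \eqref{K(1,1)} are unnecessary here, since only the abstract identity $K_k(1,1)-K_{k-1}(1,1)=P_k(1)^2/h_k$ is used).

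The one genuine difference is your treatment of \eqref{M-kernels}. The paper does \emph{not} use the reproducing-kernel uniqueness argument you advocate; instead it establishes, from \eqref{M-polynomials} and \eqref{M-norms}, the termwise identity
\[
\frac{q_j(t)q_j(u)}{\tilde h_j}=\frac{P_j(t)P_j(u)}{h_j}-\frac{M K_j(1,t)K_j(1,u)}{1+M K_j(1,1)}+\frac{M K_{j-1}(1,t)K_{j-1}(1,u)}{1+M K_{j-1}(1,1)},
\]
and then obtains \eqref{M-kernels} by a telescoping sum over $j=0,\dots,k$. Your uniqueness route is shorter and requires no algebra with the individual $q_j$: one simply checks that the right-hand side of \eqref{M-kernels}, paired under $(\cdot,\cdot)_{\alpha,\beta}^M$ with any $p\in\Pi_k$, returns $p(t)$, which is a two-line computation. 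The paper's telescoping route, on the other hand, has the minor advantage of exhibiting the exact relation between each summand of $\tilde K_k$ and the corresponding Jacobi data, which can be useful if one later needs to estimate the $q_j$ individually. Both arguments are standard and valid.
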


\bigskip

\begin{proof} Expand $q_k^{(\alpha,\beta,M)}$ in terms of Jacobi polynomials,
$$
q_k^{(\alpha,\beta,M)}(t)=\sum_{i=0}^k b_{k,i} P_{i}^{(\alpha, \beta)}(t),
$$
where $b_{k,k} = 1$. For $i=0, \ldots, k-1$ we have
$$ h_{i}^{(\alpha,\beta)} b_{j,i} =(q_k^{(\alpha,\beta,M)},
P_{i}^{(\alpha, \beta)})_{\alpha,\beta} =
- M q_k^{(\alpha,\beta,M)}(1) P_{i}^{(\alpha, \beta)}(1),
$$
thus
$$
q_{k}^{(\alpha, \beta, M)}(t) = P_{k}^{(\alpha, \beta)}(t) - M q_{k}^{(\alpha, \beta, M)}(1) K_{k-1}^{(\alpha, \beta)}(1,t),
$$
which gives
\begin{equation} \label{q(1)}
q_{k}^{(\alpha, \beta, M)}(1) = \frac{P_{k}^{(\alpha, \beta)}(1)}{1 + M K_{k-1}^{(\alpha, \beta)}(1,1)},
\end{equation}
and therefore \eqref{M-polynomials} holds.

Relation \eqref{M-norms} follows again from \eqref{q(1)} since
\begin{align*}
{\tilde h}_k^{(\alpha,\beta,M)} &= (q_{k}^{(\alpha, \beta, M)},q_{k}^{(\alpha, \beta, M)})_{\alpha,\beta}^{M} =
(q_{k}^{(\alpha, \beta, M)},P_{k}^{(\alpha, \beta)})_{\alpha,\beta}^{M} = \\
 &= (q_{k}^{(\alpha, \beta,M)},P_{k}^{(\alpha, \beta)})_{\alpha,\beta} + M  
q_{k}^{(\alpha, \beta, M)}(1)P_{k}^{(\alpha, \beta)}(1) =\\
&= h_k^{(\alpha,\beta)} + \frac{M P_{k}^{(\alpha, \beta)}(1)^2}{1 + M K_{k-1}^{(\alpha, \beta)}(1,1)}
= h_k^{(\alpha,\beta)}\dfrac{1 + M K_{k}^{(\alpha, \beta)}(1,1)}{1 + M K_{k-1}^{(\alpha, \beta)}(1,1)}.
\end{align*}

Now, from \eqref{M-polynomials}, \eqref{M-norms} and the identity 
$$K_{k}^{(\alpha, \beta)}(t,1) - K_{k-1}^{(\alpha, \beta)}(t,1) = 
\frac{P_{k}^{(\alpha, \beta)}(t)P_{k}^{(\alpha, \beta)}(1)}{h_k^{(\alpha,\beta)}},$$
we can easily prove
\begin{align*}
\frac{q_{k}^{(\alpha, \beta,M)}(t) q_{k}^{(\alpha, \beta,M)}(u)}{{\tilde h}_k^{(\alpha,\beta,M)}} &= 
\frac{P_{k}^{(\alpha, \beta)}(t)P_{k}^{(\alpha, \beta)}(u)}{h_k^{(\alpha,\beta)}} - \\
&- \dfrac{M K_{k}^{(\alpha, \beta)}(1,t)K_{k}^{(\alpha, \beta)}(1,u)}{1 + M K_{k}^{(\alpha, \beta)}(1,1)}
+ \dfrac{M K_{k-1}^{(\alpha, \beta)}(1,t)K_{k-1}^{(\alpha, \beta)}(1,u)}{1 + M K_{k-1}^{(\alpha, \beta)}(1,1)}
\end{align*}
and a telescopic sum gives \eqref{M-kernels}.
\end{proof}


\section{The kernels}
\setcounter{equation}{0}

The main purpose of this section is to study the reproducing kernels associated to the orthogonal polinomials $Q_{j,\nu}^{n}(x)$. In particular, we will establish relations with the classical kernels on the unit ball. 
The $n$-th classical \textbf{kernel} on the ball 
is usually defined as the polynomial
$$
\mathbb{K}_n(x,y) := \sum_{m=0}^n \sum_{j=0}^{\lfloor \frac{m}{2}\rfloor}
\sum_{\nu = 1}^{a_{m-2j}^d} \frac{P_{j,\nu}^m(x)P_{j,\nu}^m(y)}{H^m_{j,\nu}}.
$$
Our next result provides a representation of the $d$-variable kernels in terms of the univariate
Jacobi kernels.

\begin{theorem} \label{thm-5.1}
Let $d \geqslant 3$, for $x, y \in \mathbb{B}^d$ we have
\begin{align} \label{eqn-5.1}
\mathbb{K}_n(x,y) =& \frac{1}{c_{\mu}^d} \,\sum_{k=0}^{n} 
K^{(\mu,k+\delta)}_{\left\lfloor\frac{n-k}{2}\right\rfloor}(2r^2-1,2s^2-1)\,\times \, (2\,r\,s)^{k}\,
\frac{k+\delta}{\delta} \,C_{k}^{\delta}(\langle \xi, \varrho \rangle),
\end{align}
where $x=r \,\xi$, $y=s \,\varrho$, $r=\|x\|$, $s= \|y\|$, $\xi, \varrho \in \mathbb{S}^{d-1}$, $\delta = (d-2)/2$ and $C_{k}^{\delta}$ are the Gegenbauer polynomials (\cite[(4.7.1)
in p. 80]{Szego1975}). 

For $d= 2$, \eqref{eqn-5.1} reduces to
\begin{align} \label{eqn-5.2}
\mathbb{K}_n(x,y) =& \frac{1}{c_{\mu}^d} \,\sum_{k=0}^{n} 
K^{(\mu,k+\delta)}_{\left\lfloor\frac{n-k}{2}\right\rfloor}(2r^2-1,2s^2-1)\,\times \, 2^{k+1}\,(r\,s)^{k}\,
 \,T_{k}(\langle \xi, \varrho \rangle),
\end{align}
where $T_k$ are the first kind Chebyshev polynomials  (\cite[p. 38]{Szego1975}).
\end{theorem}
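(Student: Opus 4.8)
The plan is to substitute the explicit expressions for the ball polynomials $P_{j,\nu}^m$ and their norms $H_{j,\nu}^{m,\mu}$ from Proposition~\ref{baseP} into the definition of $\mathbb{K}_n$, pass to spherical--polar coordinates $x = r\xi$, $y = s\varrho$, and then collapse the summation over the spherical--harmonic index $\nu$ by means of the addition theorem. Writing $k = m-2j$ for the degree of the harmonic part and using the homogeneity $Y_\nu^{k}(x) = r^{k}\,Y_\nu^{k}(\xi)$, each summand factors as a radial piece built from $P_j^{(\mu,\beta_k)}(2r^2-1)\,P_j^{(\mu,\beta_k)}(2s^2-1)$ times an angular piece $Y_\nu^{k}(\xi)\,Y_\nu^{k}(\varrho)$, together with the factor $(rs)^{k}$ and the reciprocal norm $2^{k}/(c_\mu^d\,h_j^{(\mu,\beta_k)})$, where I have used $H_{j,\nu}^{m,\mu} = \tfrac{c_\mu^d}{2^{k}}h_j^{(\mu,\beta_k)}$.

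First I would carry out the $\nu$--summation. Since $\{Y_\nu^{k}\}$ is an orthonormal basis of $\mathcal{H}_{k}^d$, the addition theorem for spherical harmonics gives
\begin{equation*}
\sum_{\nu=1}^{a_{k}^d} Y_\nu^{k}(\xi)\,Y_\nu^{k}(\varrho) = \frac{k+\delta}{\delta}\,C_{k}^{\delta}(\langle \xi,\varrho\rangle), \qquad \delta = \frac{d-2}{2},
\end{equation*}
valid for $d \geqslant 3$. This turns the original triple sum into a double sum over $m$ and $j$ whose entire angular dependence is carried by a single Gegenbauer polynomial. The decisive step is then the reindexing: I would replace the pair $(m,j)$ by $(k,j)$ with $k = m-2j$. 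As $m$ runs from $0$ to $n$ and $j$ from $0$ to $\lfloor m/2\rfloor$, the index $k$ runs from $0$ to $n$ and, for each fixed $k$, the radial index $j$ runs from $0$ to $\lfloor (n-k)/2\rfloor$.

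The key observation is that $\beta_{m-2j} = k + \delta$ depends only on $k$ and not on $j$; hence, for fixed $k$, all the radial Jacobi polynomials share the parameters $(\mu,\,k+\delta)$, and the inner sum
\begin{equation*}
\sum_{j=0}^{\lfloor (n-k)/2\rfloor} \frac{P_j^{(\mu,k+\delta)}(2r^2-1)\,P_j^{(\mu,k+\delta)}(2s^2-1)}{h_j^{(\mu,k+\delta)}}
\end{equation*}
is, by definition, precisely the univariate Jacobi kernel $K_{\lfloor (n-k)/2\rfloor}^{(\mu,k+\delta)}(2r^2-1,2s^2-1)$. Collecting the surviving factors $2^k(rs)^k = (2rs)^{k}$ and the constant $1/c_\mu^d$ then yields \eqref{eqn-5.1}. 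Finally, for $d = 2$ one has $\delta = 0$ and the Gegenbauer kernel degenerates; here I would invoke the limit $\lim_{\delta\to 0^+}\tfrac{k+\delta}{\delta}\,C_{k}^{\delta}(t) = 2\,T_{k}(t)$ for $k\geqslant 1$, so that the factor $2^k$ from $(2rs)^k$ combines with the $2$ from the limit to give the $2^{k+1}(rs)^k\,T_k(\langle\xi,\varrho\rangle)$ of \eqref{eqn-5.2}. The genuine content of the argument is the bookkeeping in the reindexing: one must verify that the substitution $k=m-2j$ respects the floor functions in the summation ranges, and that $\beta_{m-2j}$ is truly $j$--independent -- this $j$--independence is exactly what makes the radial sum reassemble into a single Jacobi kernel and is the heart of the proof, whereas the addition theorem and the Gegenbauer--Chebyshev limit enter only as black boxes.
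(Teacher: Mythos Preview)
Your proposal is correct and follows essentially the same route as the paper's own proof: expand $\mathbb{K}_n$ via Proposition~\ref{baseP}, pass to polar coordinates using homogeneity, collapse the $\nu$-sum with the addition formula for spherical harmonics, reindex via $k=m-2j$, and recognize the resulting $j$-sum as the Jacobi kernel $K_{\lfloor (n-k)/2\rfloor}^{(\mu,k+\delta)}$, handling $d=2$ through the Gegenbauer--Chebyshev limit. Your emphasis on the $j$-independence of $\beta_{m-2j}$ as the mechanism that lets the radial sum reassemble into a single Jacobi kernel is exactly the point, and your bookkeeping on the reindexing ranges matches the paper's.
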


\begin{proof} 
For $n \in \mathbb{N}_0$ and $0 \le j \le n/2$, let $\{Y_\nu^{n-2j}: 1\le \nu\le a_{n-2j}^d\}$ denote
an orthonormal basis for $\mathcal{H}_{n-2j}^d$.
In spherical-polar coordinates, $x = r\xi$ and $y = s\varrho$, since $Y_\nu^{m-2j}$ is 
homogeneous of degree $m-2j$ we get
\begin{align*}
\mathbb{K}_n(x,y) =& \sum_{m=0}^n \sum_{j=0}^{\lfloor \frac{m}{2}\rfloor}
\sum_{\nu = 1}^{a_{m-2j}^d} \frac{P_{j,\nu}^m(x)P_{j,\nu}^m(y)}{H^m_{j,\nu}}\\
=& \sum_{m=0}^n \sum_{j=0}^{\lfloor \frac{m}{2}\rfloor}
\frac{1}{H^m_{j,\nu}} P_{j}^{(\mu, m-2j + \delta)}(2\,\|x\|^2 -1)\,P_{j}^{(\mu, m-2j + \delta)}(2\,\|y\|^2 -1)
\\ 
 &\times \sum_{\nu = 1}^{a_{m-2j}^d}  Y_\nu^{m-2j}(x) \, Y_\nu^{m-2j}(y) \\
=& \frac{1}{c_{\mu}^d} \sum_{m=0}^n \sum_{j=0}^{\lfloor \frac{m}{2}\rfloor}
\frac{2^{m-2j}}{ h_j^{(\mu,m-2j+\delta)}} P_{j}^{(\mu, m-2j + \delta)}(2r^2 -1)\,P_{j}^{(\mu, m-2j + \delta)}(2s^2 -1)\\ 
 &\times (rs)^{m-2j} \sum_{\nu = 1}^{a_{m-2j}^d}  Y_\nu^{m-2j}(\xi) \, Y_\nu^{m-2j}(\varrho).
\end{align*}
Making use of  the addition formula of spherical harmonics for $d\geqslant 3$ (see \cite[p. 9]{DaiXu2013})
$$
\sum_{\nu=1}^{a_k^d} 
Y_{\nu}^k(\xi) Y_{\nu}^k(\varrho) = 
\frac{k+\delta}{\delta} \,C_{k}^{\delta}(\langle \xi, \varrho \rangle)
$$ 
we have
\begin{align*}
\mathbb{K}_n(x,y) =& \frac{1}{c_{\mu}^d} \sum_{m=0}^n \sum_{j=0}^{\lfloor \frac{m}{2}\rfloor}
\frac{1}{ h_j^{(\mu,m-2j+\delta)}} P_{j}^{(\mu, m-2j + \delta)}(2r^2 -1)\,P_{j}^{(\mu, m-2j + \delta)}(2s^2 -1)\\ 
 &\times (2rs)^{m-2j} \frac{m-2j+\delta}{\delta} \,C_{m-2j}^{\delta}(\langle \xi, \varrho \rangle).
\end{align*}
Now, make $k = m-2j$ to change the order in the double sum
\begin{align*}
\mathbb{K}_n(x,y) =& \frac{1}{c_{\mu}^d} \sum_{k=0}^n \sum_{j=0}^{\left\lfloor\frac{n-k}{2}\right\rfloor}
\frac{1}{ h_j^{(\mu,k+\delta)}} P_{j}^{(\mu, k + \delta)}(2r^2 -1)\,P_{j}^{(\mu, k+ \delta)}(2s^2 -1)\\ 
 &\times (2rs)^{k} \frac{k+\delta}{\delta} \,C_{k}^{\delta}(\langle \xi, \varrho \rangle),
\end{align*}
and therefore
\begin{align*}
\mathbb{K}_n(x,y) =& \frac{1}{c_{\mu}^d} \,\sum_{k=0}^{n} 
K^{(\mu,k+\delta)}_{\left\lfloor\frac{n-k}{2}\right\rfloor}(2r^2-1,2s^2-1)\,\times \, (2\,r\,s)^{k}\,
\frac{k+\delta}{\delta} \,C_{k}^{\delta}(\langle \xi, \varrho \rangle).
\end{align*}

The case $d=2$ follows from the limit relation
$$
\lim_{\delta \to 0} \frac{k+\delta}{\delta} \,C_{k}^{\delta}(t)
= 2 \, T_k(t),
$$
(see \cite[(4.7.8) in p. 80]{Szego1975}).
\end{proof}

\bigskip

In a similar way, we define the $n$-th kernel associated to the polynomials $Q_{j,\nu}^m(x)$ as 
$$
\tilde{\mathbb{K}}_n(x,y) := \sum_{m=0}^n \sum_{j=0}^{\lfloor \frac{m}{2}\rfloor}
\sum_{\nu = 1}^{a_{m-2j}^d} \frac{Q_{j,\nu}^m(x)Q_{j,\nu}^m(y)}{\tilde{H}^m_{j,\nu}}.
$$
Proceeding as in Theorem~\ref{thm-5.1} we can obtain a representation of this kernels 
in terms of the univariate kernels associated to the Uvarov modifications. Thus, for $x, y \in \mathbb{B}^d$, 
$d \geqslant 3$, we have
\begin{align} \label{Ktilde_1}
\tilde{\mathbb{K}}_n(x,y) =& \frac{1}{c_{\mu}^d} \,\sum_{k=0}^{n} 
\tilde{K}^{(\mu,k+\delta,M_k)}_{\left\lfloor\frac{n-k}{2}\right\rfloor}(2r^2-1,2s^2-1)\,\times \, (2\,r\,s)^{k}\,
\frac{k+\delta}{\delta} \,C_{k}^{\delta}(\langle \xi, \varrho \rangle).
\end{align}
For $d= 2$
\begin{align} \label{Ktilde_1_d2}
\tilde{\mathbb{K}}_n(x,y) =& \frac{1}{c_{\mu}^d} \,\sum_{k=0}^{n} 
\tilde{K}^{(\mu,k+\delta,M_k)}_{\left\lfloor\frac{n-k}{2}\right\rfloor}(2r^2-1,2s^2-1)\,\times \, 
2^{k+1}\,(r\,s)^{k} \,T_{k}(\langle \xi, \varrho \rangle).
\end{align}

\bigskip

Finally, from \eqref{M-kernels} we derive a formula connecting both kernels in terms of the
classical Jacobi kernels.

\bigskip

\begin{prop}\label{prop 5.2}
Let $x=r \,\xi$, $y=s \,\varrho$, $r=\|x\|$, $s= \|y\|$, $\xi, \varrho \in \mathbb{S}^{d-1}$.
For $n\geqslant0$ and $d\geqslant 3$, we get
\begin{align*} 
& \mathbb{K}_n(x,y) - \tilde{\mathbb{K}}_n(x,y) = \\
& \frac{1}{c_{\mu}^d} \,\sum_{k=0}^{n}
\frac{
M_k K^{(\mu,k+\delta)}_{\left\lfloor\frac{n-k}{2}\right\rfloor}(2r^2-1,1) \, K^{(\mu,k+\delta)}_{\left\lfloor\frac{n-k}{2}\right\rfloor}
(2s^2-1,1)}{1 + M_k K^{(\mu,k+\delta)}_{\left\lfloor\frac{n-k}{2}\right\rfloor}(1,1)} 
\times \,2^{k}\, (r\,s)^{k}\,
\frac{k+\delta}{\delta} \,C_{k}^{\delta}(\langle \xi, \varrho \rangle). \nonumber
\end{align*}
For $d=2$
\begin{align*} 
& \mathbb{K}_n(x,y) - \tilde{\mathbb{K}}_n(x,y) = \\
& \frac{1}{c_{\mu}^d} \,\sum_{k=0}^{n}
\frac{
M_k K^{(\mu,k+\delta)}_{\left\lfloor\frac{n-k}{2}\right\rfloor}(2r^2-1,1) \, K^{(\mu,k+\delta)}_{\left\lfloor\frac{n-k}{2}\right\rfloor}
(2s^2-1,1)}{1 + M_k K^{(\mu,k+\delta)}_{\left\lfloor\frac{n-k}{2}\right\rfloor}(1,1)} 
\times \,2^{k+1}\,(r\,s)^{k}\,
 \,T_{k}(\langle \xi, \varrho \rangle). \nonumber
\end{align*}
\end{prop}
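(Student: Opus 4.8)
The plan is to subtract the two kernel representations already established and thereby reduce the whole $d$-variable computation to the univariate identity \eqref{M-kernels} of Lemma~\ref{Uvarov}. The key observation is that both expansions, $\mathbb{K}_n$ via \eqref{eqn-5.1} and $\tilde{\mathbb{K}}_n$ via \eqref{Ktilde_1}, are sums over $k$ of a univariate kernel evaluated at $(2r^2-1,2s^2-1)$, multiplied by the \emph{same} radial-angular factor $(2rs)^k\,\frac{k+\delta}{\delta}\,C_k^\delta(\langle\xi,\varrho\rangle)$. Hence, subtracting term by term, this common factor pulls out and the difference becomes
$$\mathbb{K}_n(x,y)-\tilde{\mathbb{K}}_n(x,y)=\frac{1}{c_\mu^d}\sum_{k=0}^n\Big(K^{(\mu,k+\delta)}_{\lfloor\frac{n-k}{2}\rfloor}(2r^2-1,2s^2-1)-\tilde{K}^{(\mu,k+\delta,M_k)}_{\lfloor\frac{n-k}{2}\rfloor}(2r^2-1,2s^2-1)\Big)\,(2rs)^k\,\frac{k+\delta}{\delta}\,C_k^\delta(\langle\xi,\varrho\rangle).$$

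Next I would insert identity \eqref{M-kernels} into each summand, making the substitution $\alpha=\mu$, $\beta=k+\delta$, $M=M_k$ and degree $\lfloor\frac{n-k}{2}\rfloor$, which yields for every $k$
$$K^{(\mu,k+\delta)}_{\lfloor\frac{n-k}{2}\rfloor}(t,u)-\tilde{K}^{(\mu,k+\delta,M_k)}_{\lfloor\frac{n-k}{2}\rfloor}(t,u)=\frac{M_k\,K^{(\mu,k+\delta)}_{\lfloor\frac{n-k}{2}\rfloor}(1,t)\,K^{(\mu,k+\delta)}_{\lfloor\frac{n-k}{2}\rfloor}(1,u)}{1+M_k\,K^{(\mu,k+\delta)}_{\lfloor\frac{n-k}{2}\rfloor}(1,1)}.$$
I would then set $t=2r^2-1$, $u=2s^2-1$ and use the symmetry of the Jacobi kernels to rewrite $K(1,t)=K(t,1)$, so that the numerator becomes $K^{(\mu,k+\delta)}_{\lfloor\frac{n-k}{2}\rfloor}(2r^2-1,1)\,K^{(\mu,k+\delta)}_{\lfloor\frac{n-k}{2}\rfloor}(2s^2-1,1)$. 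Substituting this back and writing $(2rs)^k=2^k(rs)^k$ produces exactly the stated formula for $d\geq 3$.

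For $d=2$ the argument is word for word the same, now starting from \eqref{eqn-5.2} and \eqref{Ktilde_1_d2}; the only difference is that the common factor is $2^{k+1}(rs)^k\,T_k(\langle\xi,\varrho\rangle)$, and the identical term-by-term subtraction together with \eqref{M-kernels} gives the second displayed identity. I do not expect any serious obstacle here, as the proof is essentially bookkeeping; the only points that require care are matching the Uvarov parameters ($\beta=k+\delta$, $M=M_k$, degree $\lfloor\frac{n-k}{2}\rfloor$) to those occurring in the two kernel expansions, and invoking the symmetry of the univariate kernels to pass from $K(1,\cdot)$ to $K(\cdot,1)$.
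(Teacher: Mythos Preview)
Your proposal is correct and is precisely the argument the paper has in mind: the proposition is stated without proof, preceded only by the remark that ``from \eqref{M-kernels} we derive a formula connecting both kernels in terms of the classical Jacobi kernels,'' which is exactly the term-by-term subtraction of \eqref{eqn-5.1}/\eqref{eqn-5.2} and \eqref{Ktilde_1}/\eqref{Ktilde_1_d2} followed by an application of Lemma~\ref{Uvarov} that you describe.
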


\bigskip

\section{Asymptotics for Christoffel functions}
\setcounter{equation}{0}

In this section we shall show some asymptotic results for the Christoffel functions.
We must restrict ourselves to the case $\mu \geqslant-\frac{1}{2}$ because of existing
asymptotics for Christoffel functions on the ball have only been
established for this range of values. Our results include asymptotics for the interior 
of the ball as well as for its boundary.

On the boundary of the ball, we recover the value of the mass from the asymptotic of the
Christoffel functions.

\begin{theorem}\label{th1}
Assume that $\mu \geqslant-\frac{1}{2}$. For $\left\Vert x\right\Vert =1$, we get
\begin{equation}\label{ass1}
\lim_{n\rightarrow \infty }\frac{\tilde{\mathbb{K}}_n(x,x)}{\binom{n + d -1}{n}}=\frac{2}{\lambda}.
\end{equation}
\end{theorem}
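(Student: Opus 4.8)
The plan is to evaluate the representation \eqref{Ktilde_1} on the diagonal at a boundary point and then pass to the limit term by term. First I would set $x=y$ with $\|x\|=1$, so that $r=s=1$, $\langle\xi,\varrho\rangle=1$ and $2r^2-1=2s^2-1=1$; then \eqref{Ktilde_1} collapses to the single sum
\[
\tilde{\mathbb{K}}_n(x,x)=\frac{1}{c_\mu^d}\sum_{k=0}^n \tilde{K}^{(\mu,k+\delta,M_k)}_{\lfloor(n-k)/2\rfloor}(1,1)\,2^k\,\frac{k+\delta}{\delta}\,C_k^\delta(1).
\]
I would then use the diagonal value of the zonal harmonic, $\frac{k+\delta}{\delta}C_k^\delta(1)=a_k^d=\dim\mathcal{H}_k^d$ (which follows from the addition formula by setting $\varrho=\xi$ and integrating over the sphere against the orthonormal basis $\{Y_\nu^k\}$), together with the value \eqref{M-kernels-one} of the Uvarov kernel at $(1,1)$. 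Substituting $M_k=\lambda 2^k/c_\mu^d$ and writing $m=\lfloor(n-k)/2\rfloor$, the factor $2^k\tilde{K}^{(\mu,k+\delta,M_k)}_{m}(1,1)$ simplifies to $c_\mu^d/(\lambda+\varepsilon_{n,k})$ with $\varepsilon_{n,k}=c_\mu^d/\bigl(2^k K^{(\mu,k+\delta)}_{m}(1,1)\bigr)$, so that
\[
\tilde{\mathbb{K}}_n(x,x)=\sum_{k=0}^n \frac{a_k^d}{\lambda+\varepsilon_{n,k}}.
\]

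The heart of the argument is to show that, after dividing by $\binom{n+d-1}{n}$, the quantities $\varepsilon_{n,k}$ are negligible. The crucial point is that the factor $2^{-\alpha-\beta-1}=2^{-\mu-k-\delta-1}$ in the explicit value \eqref{K(1,1)} of $K^{(\mu,k+\delta)}_{m}(1,1)$ carries a $2^{-k}$ that cancels the $2^k$ in $M_k$. Using \eqref{K(1,1)} I would bound $2^k K^{(\mu,k+\delta)}_{m}(1,1)$ from below by a constant times $(k+1)^{\mu+1}$, \emph{uniformly in} $m$; this is where the ratios of Gamma functions with argument gap $\mu+1>0$ enter (and hence where $\mu>-1$, guaranteed by $\mu\geq-\tfrac12$, is used). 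Consequently $\varepsilon_{n,k}\leq C\,(k+1)^{-(\mu+1)}$ uniformly in $n$.

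With this uniform bound I would conclude by a weighted-averaging argument. Write the target ratio as a product of two factors,
\[
\frac{\tilde{\mathbb{K}}_n(x,x)}{\binom{n+d-1}{n}}
=\frac{\sum_{k=0}^n a_k^d/(\lambda+\varepsilon_{n,k})}{\sum_{k=0}^n a_k^d}
\cdot\frac{\sum_{k=0}^n a_k^d}{\binom{n+d-1}{n}}.
\]
The second factor is computed exactly from the telescoping identity $\sum_{k=0}^n a_k^d=\binom{n+d-1}{d-1}+\binom{n+d-2}{d-1}$ and tends to $2$. For the first factor, the weights $a_k^d/\sum_j a_j^d$ concentrate on large $k$ as $n\to\infty$, while $\varepsilon_{n,k}\to0$ uniformly for large $k$ by the bound above; a standard Toeplitz/averaging estimate (split at a fixed threshold $K_0$, let $n\to\infty$, then $K_0\to\infty$) then shows this factor tends to $1/\lambda$. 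Multiplying the two limits gives $2/\lambda$.

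I expect the main obstacle to be the uniform control of $\varepsilon_{n,k}$ for $k$ close to $n$, where the Uvarov kernel has small degree $m=\lfloor(n-k)/2\rfloor$ and has not yet saturated to its limiting value $1/M_k$. This is resolved precisely by the cancellation of the $2^{\pm k}$ factors noted above, which forces $\varepsilon_{n,k}\to0$ as $k\to\infty$ even when $m$ remains bounded; a crude power count then shows $\binom{n+d-1}{n}^{-1}\sum_{k} a_k^d\,\varepsilon_{n,k}=O(n^{d-2-\mu})/O(n^{d-1})\to0$, confirming that the error does not affect the limit.
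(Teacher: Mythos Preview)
Your proposal is correct and shares the paper's overall strategy: start from \eqref{Ktilde_1}, plug in \eqref{M-kernels-one} and \eqref{K(1,1)}, and pass to the limit via a Toeplitz-type averaging. The execution, however, is different and somewhat cleaner than the paper's. The paper writes the sum as $\frac{1}{\delta}\sum_k \frac{2^kK_m^{(\mu,k+\delta)}(1,1)}{c_\mu^d+\lambda\,2^kK_m^{(\mu,k+\delta)}(1,1)}\,(k+\delta)\binom{k+d-3}{k}$ and then \emph{splits} at $k=n-\lfloor\log n\rfloor$: on the bulk $S_{1,n}$ one has $m=\lfloor(n-k)/2\rfloor\to\infty$, so Stirling gives $2^kK_m^{(\mu,k+\delta)}(1,1)\sim C(m+k)^{\mu+1}m^{\mu+1}\to\infty$ and the fraction tends to $1/\lambda$; the tail $S_{2,n}$ is bounded crudely by $O(n^{d-2}\log n)$. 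A separate argument is given for $d=2$. Your route avoids the split entirely by observing that the factor $\Gamma(m+k+\mu+\delta+2)/\Gamma(m+k+\delta+1)$ already supplies growth $\geq C(k+1)^{\mu+1}$ \emph{uniformly in} $m\geq0$, so $\varepsilon_{n,k}\to0$ as $k\to\infty$ regardless of $m$; together with the identification $\frac{k+\delta}{\delta}C_k^\delta(1)=a_k^d$ and the exact telescoping $\sum_{k\leq n}a_k^d=\binom{n+d-1}{d-1}+\binom{n+d-2}{d-1}$, a single threshold argument finishes all $d\geq2$ at once. The trade-off is that the paper's split makes the role of the asymptotic $1/\lambda$ more transparent term by term, while your uniform bound is shorter and dimension-independent.
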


\begin{proof}
From \eqref{Ktilde_1} and \eqref{M-kernels-one}, for $\left\Vert x\right\Vert =1$ we deduce
\begin{align*}
\tilde{\mathbb{K}}_n(x,x) =& \frac{1}{c_{\mu}^d} \,\sum_{k=0}^{n} 
\frac{K^{(\mu,k+\delta)}_{\left\lfloor\frac{n-k}{2}\right\rfloor}(1,1)}
{1 + M_k K^{(\mu,k+\delta)}_{\left\lfloor\frac{n-k}{2}\right\rfloor}(1,1)}\,
\times \, 2^{k}\,
\frac{k+\delta}{\delta} \,C_{k}^{\delta}(1),
\end{align*}
then, writing 
$$
\delta = \frac{d-2}{2}, \quad M_k =  \frac{\lambda}{c_{\mu}^d} 2^k \textrm{ and }
C_{k}^{\delta}(1) = \binom{k+2\delta-1}{k}
$$
we get
\begin{align*}
\tilde{\mathbb{K}}_n(x,x) =& \frac{1}{\delta} \,\sum_{k=0}^{n} 
\frac{2^k K^{(\mu,k+\delta)}_m(1,1)}
{c_{\mu}^d  + \lambda 2^k K^{(\mu,k+\delta)}_m(1,1)}\,
(k+\delta) \,\binom{k+d-3}{k},
\end{align*}
where we denote $m = {\left\lfloor\frac{n-k}{2}\right\rfloor}$.

Let us split the above sum in two parts
$$
\tilde{\mathbb{K}}_n(x,x) = \frac{1}{\delta} \left( S_{1,n} + S_{2,n} \right), 
$$
with 
\begin{align*}
S_{1,n} &= \sum_{k=0}^{n-\left\lfloor \log{n} \right\rfloor} 
\frac{2^k K^{(\mu,k+\delta)}_m(1,1)}
{c_{\mu}^d  + \lambda 2^k K^{(\mu,k+\delta)}_m(1,1)}\,
(k+\delta) \,\binom{k+d-3}{k},\\
S_{2,n} &= \sum_{k=n-\left\lfloor \log{n} \right\rfloor + 1}^{n} 
\frac{2^k K^{(\mu,k+\delta)}_m(1,1)}
{c_{\mu}^d  + \lambda 2^k K^{(\mu,k+\delta)}_m(1,1)}\,
(k+\delta) \,\binom{k+d-3}{k},
\end{align*}
In order to estimate $S_{1,n}$ we use relation \eqref{K(1,1)} to obtain
$$
2^k K^{(\mu,k+\delta)}_m(1,1) = \frac{2^{-\mu-\delta-1}}{\Gamma(\mu+1)\Gamma(\mu+2)} 
\frac{\Gamma(m + k + \mu + \delta + 2)}{\Gamma(m + k + \delta + 1)}. 
\frac{\Gamma(m + \mu+2)}{\Gamma(m+1)}
$$
Now, we can use the following consequence of Stirling's formula: for fixed $a,b$, as $x\rightarrow \infty$,
$$
\frac{\Gamma (x+b)}{\Gamma ( x+a) }=x^{b-a}( 1+o(1) ),
$$
in this way
\begin{align*} 
2^k K^{(\mu,k+\delta)}_m(1,1) = \frac{2^{-\mu-\delta-1}}{\Gamma(\mu+1)\Gamma(\mu+2)}
(m+k)^ {\mu+1} m^{\mu+1} (1 + o(1)).
\end{align*}
Hence it follows
\begin{align} \label{K_m_11}
\frac{2^k K^{(\mu,k+\delta)}_m(1,1)}{c_{\mu}^d  + \lambda 2^k K^{(\mu,k+\delta)}_m(1,1)} = 
\frac{1}{\lambda}  (1 + o(1))
\end{align}
as $m \rightarrow \infty$.

Next, we estimate $(k+\delta)\binom{k+d-3}{k}$. For $0 \leqslant k \leqslant n$ we have
\begin{align}
\frac{1}{n^{d-2}} (k+\delta)\binom{k+d-3}{k} &= \frac{1}{(d-3)!}\left(\frac{k}{n}+\frac{d-2}{2n}\right)
\left(\frac{k}{n}+\frac{d-3}{n}\right)\ldots \left(\frac{k}{n}+\frac{1}{n}\right) \nonumber\\ 
&=\frac{1}{(d-3)!}\left(\left(\frac{k}{n}\right)^{d-2} + \frac{a_0}{n} \left(\frac{k}{n}\right)^{d-3}
+ \ldots + \frac{a_{d-3}}{n^{d-2}}\right) \nonumber\\
&\leqslant \frac{1}{(d-3)!}\left(\left(\frac{k}{n}\right)^{d-2} + \frac{C}{n} \right) \label{c-ineq}
\end{align}
where $C = a_0 + a_1 + \ldots + a_{d-3} > 0$ is independent of $k$ and $n$. Therefore
\begin{align*}
\frac{S_{1,n}}{n^{d-1}} &= 	\frac{1}{\lambda \delta n} 
\sum_{k=0}^{n-\left\lfloor \log{n} \right\rfloor} 
\frac{1}{(d-3)!} \left[\left(\frac{k}{n}\right)^{d-2} + O\left(\frac{1}{n}\right) \right]
\, \left(1 + o(1) \right)\\
&= 	\frac{2}{\lambda (d-2)! n} 
\sum_{k=0}^{n-\left\lfloor \log{n} \right\rfloor} 
 \left[\left(\frac{k}{n}\right)^{d-2} + O\left(\frac{1}{n}\right) \right]
\, \left(1 + o(1)\right).
\end{align*}
Next, since $0\leqslant k\leqslant n$, we have
$$
0 \leqslant \frac{1}{n}\sum_{n-\left\lfloor \log{n} \right\rfloor+1}^n   
 \left(\frac{k}{n}\right)^{d-2} \leqslant
\frac{1}{n}\sum_{n-\left\lfloor \log{n} \right\rfloor+1}^n   
 1 \leqslant  \frac{\log{n}}{n}
$$
which implies
$$
\lim_{n\to\infty} \frac{1}{n}\sum_{k=0}^{n-\left\lfloor \log{n} \right\rfloor}   
 \left(\frac{k}{n}\right)^{d-2} =
\lim_{n\to\infty} \frac{1}{n}\sum_{k=0}^{n}   
 \left(\frac{k}{n}\right)^{d-2} = \frac{1}{d-1},
$$
where the last equality follows from Silverman-Toeplitz theorem (see \cite[p. 25]{Wimp1981}).
In this way, we conclude
$$
\lim_{n\to\infty} \frac{S_{1,n}}{\binom{n+d-1}{n}} = \frac{2}{\lambda}.
$$

On the other hand
\begin{align*}
S_{2,n} &= \sum_{k=n-\left\lfloor \log{n} \right\rfloor + 1}^{n} 
\frac{2^k K^{(\mu,k+\delta)}_m(1,1)}
{c_{\mu}^d  + \lambda 2^k K^{(\mu,k+\delta)}_m(1,1)}\,
(k+\delta) \,\binom{k+d-3}{k}\\
&\leqslant \frac{1}{\lambda}\sum_{k=n-\left\lfloor \log{n} \right\rfloor + 1}^{n} 
(k+\delta) \,\binom{k+d-3}{k}
\end{align*}
Using again \eqref{c-ineq}, for $0 \leqslant k \leqslant n$ we have
\begin{align} \label{k+delta}
\frac{1}{n^{d-2}} (k+\delta)\binom{k+d-3}{k} &\leqslant \frac{1}{(d-3)!}\left(\left(\frac{k}{n}\right)^{d-2} + \frac{C}{n} \right) \leqslant C' 
\end{align}
where $C' > 0$ is a constant independent of $k$ and $n$. Hence, we deduce
$$
\frac{S_{2,n}}{n^{d-1}} \leqslant \frac{C'}{\lambda} \,\frac{\log{n}}{n},
$$
this implies
$$
\lim_{n\to\infty} \frac{S_{2,n}}{\binom{n+d-1}{n}} = 0,
$$
and \eqref{ass1} follows for $d\geqslant 3$.

In the case $d=2$ and $\|x\| = 1$, from \eqref{Ktilde_1_d2} we have
\begin{align*}
\tilde{\mathbb{K}}_n(x,x) =& 2 \, \sum_{k=0}^{n} 
\frac{2^k K^{(\mu,k)}_m(1,1)}
{c_{\mu}^d  + \lambda 2^k K^{(\mu,k)}_m(1,1)}\,
\end{align*}
then, proceeding in the same way, from \eqref{K_m_11} we conclude
\begin{align*}
\lim_{n \to \infty} \frac{1}{n+1} \tilde{\mathbb{K}}_n(x,x) = \frac{2}{\lambda}.
\end{align*}
\end{proof}

In the following proposition an estimate on the reproducing kernels that is uniform in $k$ is obtained,

\begin{lemma}\label{le6}

Fix $\mu >-1,\delta \geqslant0$. For $m=\left\lfloor \frac{n-k}{2}\right\rfloor \geqslant1,\,k\geqslant
0$, and $t\in \left[ -1,1\right],$
\begin{eqnarray*}
K_{m}^{(\mu,k+\delta)}(t,t)   
&\leqslant  &C( 1+t) ^{-k}\left( \left\lfloor\frac{n}{2}\right\rfloor +1\right)
\\
&~& \times \left( 1-t+\frac{1}{( \left\lfloor \frac{n}{2}\right\rfloor +1) ^{2}}\right)
^{-\mu -\frac{1}{2}}\left( 1+t+\frac{1}{(\left\lfloor \frac{n}{2}\right\rfloor+1
) ^{2}}\right) ^{-\delta -\frac{1}{2}}.
\end{eqnarray*}
Here $C$ depends on $\mu $ and $\delta $ but not on $k,n,t$.
\end{lemma}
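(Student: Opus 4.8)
The plan is to reduce the diagonal kernel to a sum of squares of orthonormal Jacobi polynomials and then to control each square by a single pointwise bound that is uniform both in the degree $j$ and, decisively, in the parameter $k$. From the definition of the kernel,
$$
K_{m}^{(\mu,k+\delta)}(t,t)=\sum_{j=0}^{m}\frac{P_{j}^{(\mu,k+\delta)}(t)^{2}}{h_{j}^{(\mu,k+\delta)}}=\sum_{j=0}^{m}p_{j}(t)^{2},\qquad m=\Big\lfloor\frac{n-k}{2}\Big\rfloor,
$$
where $p_{j}:=P_{j}^{(\mu,k+\delta)}/\sqrt{h_{j}^{(\mu,k+\delta)}}$ are orthonormal with respect to $(1-t)^{\mu}(1+t)^{k+\delta}$ on $[-1,1]$. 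The favourable feature is that only the second Jacobi parameter $\beta=k+\delta$ grows, while the first parameter $\mu$ stays fixed.

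The engine of the proof is a regularized Szeg\H{o}-type bound for the orthonormal polynomials,
$$
p_{j}(t)^{2}\le C\,\bigl(1-t+(j+1)^{-2}\bigr)^{-\mu-\frac12}\bigl(1+t+(j+1)^{-2}\bigr)^{-k-\delta-\frac12},\qquad t\in[-1,1],
$$
with $C=C(\mu,\delta)$ independent of $j$ and $k$. What makes such a $k$-free constant possible is an exact cancellation of powers of $2$: those coming from the norm $h_{j}^{(\mu,k+\delta)}$ in \eqref{normJ} and those produced by rewriting the classical amplitude $(\sin\frac{\theta}{2})^{-2\mu-1}(\cos\frac{\theta}{2})^{-2(k+\delta)-1}$ in the variable $t=\cos\theta$ as $(1-t)^{-\mu-\frac12}(1+t)^{-k-\delta-\frac12}$ cancel up to an absolute factor. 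One can check the powers of $2$ against the closed form \eqref{K(1,1)} by specialising to $t=1$.

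Granting this estimate, the lemma follows by elementary manipulations that I would organise as follows. First split
$$
\bigl(1+t+(j+1)^{-2}\bigr)^{-k-\delta-\frac12}=\bigl(1+t+(j+1)^{-2}\bigr)^{-k}\,\bigl(1+t+(j+1)^{-2}\bigr)^{-\delta-\frac12}
$$
and bound the first factor by $(1+t)^{-k}$, since the base increases and the exponent is nonpositive. Next, writing $N=\lfloor n/2\rfloor+1$ and noting $j+1\le m+1\le N$, observe that for $\mu,\delta\ge-\frac12$ (the range needed for Theorem~\ref{th1}) the maps $s\mapsto(1-t+s)^{-\mu-\frac12}$ and $s\mapsto(1+t+s)^{-\delta-\frac12}$ are nonincreasing, so replacing the larger scale $(j+1)^{-2}$ by the smaller scale $N^{-2}$ only enlarges the bound. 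This produces a $j$-independent majorant, and summing it over the $m+1\le N$ indices $j$ yields exactly the asserted inequality.

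The hard part is the pointwise bound of the second paragraph with a constant that stays bounded as $k\to\infty$. The usual Szeg\H{o} asymptotics carry constants depending on the parameters, so I would not quote them directly; instead I would run a Sturm/Liouville comparison for the Jacobi differential equation, performing the Liouville transformation and tracking the dependence of the potential and of the turning points on $\beta=k+\delta$, so that the amplitude of the comparison solution reproduces the stated weight factors with a $k$-free constant (this is the mechanism behind the Erd\'elyi--Magnus--Nevai estimates). The most delicate regime is $k$ comparable to $n$, where $m$ is small while $\beta$ is large, so that the oscillatory region is short and the endpoint behaviour near $t=1$ dominates.
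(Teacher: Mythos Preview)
Your strategy hinges entirely on the pointwise estimate
\[
p_{j}(t)^{2}\le C\,\bigl(1-t+(j+1)^{-2}\bigr)^{-\mu-\frac12}\bigl(1+t+(j+1)^{-2}\bigr)^{-k-\delta-\frac12}
\]
with $C$ independent of both $j$ and $k$. You acknowledge this is ``the hard part'' and outline a Sturm--Liouville comparison, but you do not carry it out, and the claim is genuinely delicate: the Erd\'elyi--Magnus--Nevai circle of results controls the weighted maximum, but extracting the regularized form above with a constant that does not blow up as $\beta=k+\delta\to\infty$ while $j$ stays bounded (precisely your ``most delicate regime'') is not a matter of bookkeeping. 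Until that bound is actually established, the proof is incomplete. A secondary issue is that your monotonicity step replacing $(j+1)^{-2}$ by $N^{-2}$ only works for $\mu\ge-\tfrac12$, whereas the lemma is stated for all $\mu>-1$.

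The paper avoids all of this with a short trick based on the extremal characterization
\[
K_{m}^{(\mu,k+\delta)}(t,t)=\sup_{\deg P\le m}\frac{P(t)^{2}}{\int_{-1}^{1}P(s)^{2}(1-s)^{\mu}(1+s)^{k+\delta}\,ds}.
\]
For $k=2\ell$ one factors $(1+s)^{k}=((1+s)^{\ell})^{2}$, sets $R(s)=P(s)(1+s)^{\ell}$, and observes that as $P$ ranges over polynomials of degree $\le m$, $R$ ranges inside the polynomials of degree $\le m+\ell\le\lfloor n/2\rfloor$. This gives
\[
K_{m}^{(\mu,k+\delta)}(t,t)\le (1+t)^{-k}\,K_{\lfloor n/2\rfloor}^{(\mu,\delta)}(t,t),
\]
and now the parameters $(\mu,\delta)$ are \emph{fixed}, so Nevai's classical Christoffel-function bound applies directly with a constant depending only on $\mu,\delta$. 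The odd case $k=2\ell+1$ is handled by the same argument with $\delta$ replaced by $\delta+1$, followed by the trivial inequality $1+t\le 1+t+(\lfloor n/2\rfloor+1)^{-2}$. The whole $k$-uniformity thus comes for free from the extremal property, with no need for parameter-uniform asymptotics of individual orthonormal polynomials.
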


\begin{proof} 
From the extremal properties of Christoffel functions,
for $k$ even, say $k=2\ell $, we have 
\begin{align*}
K_{m}^{(\mu,k+\delta)}(t,t)     &= \sup_{\deg (
P) \leqslant  m}\frac{P^{2}( t) }{\int_{-1}^{1}P^{2}(
s) \left( 1-s\right) ^{\mu }\left( 1+s\right) ^{k+\delta }ds}
\\
& =( 1+t) ^{-k}\sup_{\deg ( P) \leqslant  m}\frac{\left(
P( t) \left( 1+t\right) ^{\ell }\right)^{2}}{\int_{-1}^{1}\left(
P( s) \left( 1+s\right) ^{\ell }\right)^{2}\left( 1-s\right)
^{\mu }\left( 1+s\right) ^{\delta }ds}
\\
&\leqslant  ( 1+t) ^{-k}\sup_{\deg ( R) \leqslant  m+\ell}
\frac{R( t) ^{2}}{\int_{-1}^{1}R( s) ^{2}\left(
1-s\right) ^{\mu }\left( 1+s\right) ^{\delta }ds} \\
& = ( 1+t) ^{-k} K_{m + \left\lfloor \frac{k}{2}\right\rfloor }^{(\mu,\delta)}(t,t)  
\leqslant ( 1+t) ^{-k} K_{\left\lfloor \frac{n}{2}\right\rfloor }^{(\mu,\delta)}(t,t).
\end{align*}
We now use a result from Nevai's 1979 Memoir \cite[p. 108, Lemma 5]
{Nevai1979}, 
\begin{align*}
K_{\left\lfloor \frac{n}{2}\right\rfloor }^{(\mu,\delta)}(t,t) \leqslant &  C\left( \left\lfloor \frac{n}{2}\right\rfloor +1\right)
\\
&\times  \left( 1-t+\frac{1}{( \left\lfloor \frac{n}{2}\right\rfloor +1) ^{2}}\right)^{-\mu -\frac{1}{2
}}\left( 1+t+\frac{1}{( \left\lfloor \frac{n}{2}\right\rfloor +1)^{2}}\right)^{-\delta-\frac{1}{2}},
\end{align*}
and the result follows for $k= 2\ell$.

The case $k=2\ell +1$ can be deduced from the above reasoning as follows
\begin{align*}
K_{m}^{(\mu,2\ell+1+\delta)}(t,t) &\leqslant  
( 1+t)^{-2\ell} K_{\left\lfloor \frac{n}{2}\right\rfloor }^{(\mu,\delta+1)}(t,t) \\
& \leqslant C ( 1+t)^{-2\ell} \left( \left\lfloor \frac{n}{2}\right\rfloor +1\right)
\\
&\times  \left( 1-t+\frac{1}{( \left\lfloor \frac{n}{2}\right\rfloor +1) ^{2}}\right)^{-\mu -\frac{1}{2
}}\left( 1+t+\frac{1}{( \left\lfloor \frac{n}{2}\right\rfloor +1)^{2}}\right)^{-\delta-\frac{3}{2}}\\
& \leqslant C ( 1+t)^{-2\ell-1} \left( \left\lfloor \frac{n}{2}\right\rfloor +1\right)
\\
&\times  \left( 1-t+\frac{1}{( \left\lfloor \frac{n}{2}\right\rfloor +1) ^{2}}\right)^{-\mu -\frac{1}{2
}}\left( 1+t+\frac{1}{( \left\lfloor \frac{n}{2}\right\rfloor +1)^{2}}\right)^{-\delta-\frac{1}{2}},
\end{align*}
since for $t\in [-1,1]$ we get $0 \leqslant 1+t < 1 + t + \frac{1}{\left( \left\lfloor \frac{n}{2}\right\rfloor +1\right)^{2}}$.
\end{proof}

\bigskip

\begin{theorem}\label{th2}

For $r=\left\Vert x\right\Vert < 1$, we have
\begin{eqnarray*}
0 &<& \mathbb{K}_{n}( x,x) -\tilde{\mathbb{K}}_{n}( x,x)\\
&\leqslant  &  Cn^{d-1}
\log n \left( 2 (1-r^{2}) +\frac{4}{n^{2}}\right)^{-\mu -\frac{1}{2}}
\left( 2r^{2}+\frac{4}{n^{2}}\right) ^{-\delta -\frac{1}{2}}.
\end{eqnarray*}
Here $C$ is independent of $n$ and $x$. Consequently if $\mu \geqslant-\frac{1}{2}$, uniformly for $x$ in compact
subsets of $\left\{ x:0<\left\Vert x\right\Vert <1\right\} $,
\begin{equation}\label{ass2}
\lim_{n\rightarrow \infty }\tilde{\mathbb{K}}_{n}( x,x) / \binom{n+d}{d}
=\frac{1}{\sqrt{\pi }}
\frac{\Gamma ( \mu +1) \Gamma ( \frac{d+1}{2}) }{\Gamma ( \mu +\frac{d}{2}+1) }
\left(1-\left\Vert x\right\Vert ^{2}\right) ^{-\frac{1}{2}-\mu }.
\end{equation}
This last limit also holds for $x=0$.
\end{theorem}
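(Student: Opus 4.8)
The plan is to read both the positivity and the upper bound straight off the explicit formula for $\mathbb{K}_n(x,x)-\tilde{\mathbb{K}}_n(x,x)$ in Proposition~\ref{prop 5.2}, specialised to $y=x$ (so $r=s$, $\langle\xi,\varrho\rangle=1$ and $C_k^\delta(1)=\binom{k+2\delta-1}{k}$), and then to transfer the known interior asymptotics of the classical kernel to $\tilde{\mathbb{K}}_n$. Positivity is immediate, since every factor in
\[
\mathbb{K}_n(x,x)-\tilde{\mathbb{K}}_n(x,x)=\frac{1}{c_\mu^d}\sum_{k=0}^n\frac{M_k\,K^{(\mu,k+\delta)}_{m}(2r^2-1,1)^2}{1+M_k K^{(\mu,k+\delta)}_{m}(1,1)}\,2^k r^{2k}\,\frac{k+\delta}{\delta}\binom{k+2\delta-1}{k}
\]
(with $m=\lfloor\frac{n-k}{2}\rfloor$) is non-negative; strict positivity is cleanest from the Christoffel interpretation $1/\mathbb{K}_n(x,x)=\min\{\langle P,P\rangle_\mu:\deg P\le n,\ P(x)=1\}$, since no nonzero polynomial vanishes on all of $\mathbb{S}^{d-1}$, so the extra spherical term forces $\langle P,P\rangle_\mu^\lambda>\langle P,P\rangle_\mu$ for every admissible competitor and hence $\tilde{\mathbb{K}}_n(x,x)<\mathbb{K}_n(x,x)$.

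For the upper bound the naive estimate is worthless: Cauchy--Schwarz $K_m(2r^2-1,1)^2\le K_m(2r^2-1,2r^2-1)\,K_m(1,1)$ together with $\frac{M_kK_m(1,1)}{1+M_kK_m(1,1)}\le1$ only yields $\mathbb{K}_n(x,x)-\tilde{\mathbb{K}}_n(x,x)\le\mathbb{K}_n(x,x)\sim n^d$, whereas we need $o(n^d)$. The saving must come from the fact that, for $t=2r^2-1$ bounded away from $1$, the cross-kernel $K_m(t,1)$ is far smaller than the diagonal $K_m(t,t)$. I would make this quantitative with the closed forms \eqref{K(t,1)}--\eqref{K(1,1)}, which turn the mass-factor into an explicit Gamma-ratio times $P_m^{(\mu+1,k+\delta)}(2r^2-1)^2$; the external $2^k$ then cancels, leaving a genuine geometric weight $r^{2k}$, while $P_m^{(\mu+1,k+\delta)}(2r^2-1)^2$ is controlled by applying Lemma~\ref{le6} to the shifted kernel $K_m^{(\mu+1,k+\delta)}$ (through $P_m^2/h_m\le K_m(\cdot,\cdot)$). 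Following the splitting of Theorem~\ref{th1}, I would cut the sum at $k=n-\lfloor\log n\rfloor$: on the tail $m=O(\log n)$, and the crude Cauchy--Schwarz bound followed by Lemma~\ref{le6}---in which the factor $(1+t)^{-k}=(2r^2)^{-k}$ cancels $2^kr^{2k}$ exactly---leaves $\lfloor\log n\rfloor$ terms each of size $O(n^{d-1})$ (since $\frac{k+\delta}{\delta}\binom{k+2\delta-1}{k}\sim k^{d-2}$ and Lemma~\ref{le6} contributes the factor $\lfloor n/2\rfloor+1$), producing the announced $O(n^{d-1}\log n)$ together with the precise edge weights $\big(2(1-r^2)+\tfrac{4}{n^2}\big)^{-\mu-1/2}\big(2r^2+\tfrac{4}{n^2}\big)^{-\delta-1/2}$; on the main block $m\to\infty$ and the cross-kernel refinement gains an extra factor $O(1/m)=O(1/n)$, making that block only $O(n^{d-1})$, which is absorbed.

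The asymptotic \eqref{ass2} then follows easily away from the origin and the sphere: on a compact subset of $\{0<\|x\|<1\}$ the two edge weights are bounded, so the difference is $O(n^{d-1}\log n)=o(n^d)=o\big(\binom{n+d}{d}\big)$ uniformly, and dividing by $\binom{n+d}{d}$ and invoking Xu's interior asymptotics \cite{Xu1996} for $\mathbb{K}_n(x,x)$ (available precisely for $\mu\ge-\tfrac12$) transfers the same limit to $\tilde{\mathbb{K}}_n(x,x)$. The point $x=0$ must be handled separately, because there the general bound degenerates as $\big(2r^2+\tfrac4{n^2}\big)^{-\delta-1/2}$ blows up; but it is in fact easier, since the weight $(rs)^k$ annihilates every term with $k\ge1$ in both kernels, so that $\mathbb{K}_n(0,0)$ and $\tilde{\mathbb{K}}_n(0,0)$ reduce to the single univariate values $K^{(\mu,\delta)}_{\lfloor n/2\rfloor}(-1,-1)$ and its Uvarov modification $\tilde K^{(\mu,\delta,M_0)}_{\lfloor n/2\rfloor}(-1,-1)$, which by \eqref{M-kernels} differ only by $K_m(1,-1)^2/K_m(1,1)$, of lower order (expected $O(n^{d-2})$) than $K_m(-1,-1)\sim n^{d}$; the $x=0$ limit therefore equals the right-hand side of \eqref{ass2} evaluated at $x=0$.

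The step I expect to be genuinely delicate is the main-block estimate, namely proving the cross-kernel is smaller than the diagonal by a quantitative $O(1/m)$ factor \emph{uniformly in $k$}. This forces one to understand $P_m^{(\mu+1,k+\delta)}(2r^2-1)$ in the coupled regime where the second Jacobi parameter $k+\delta$ grows together with (and may dominate) the degree $m$, a regime outside the classical fixed-parameter Szeg\H{o} asymptotics; keeping the constants uniform across the whole range of $(m,k)$ and verifying that the main block is truly lower order than the tail is where the real work lies.
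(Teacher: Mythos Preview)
Your overall strategy is correct and would ultimately yield the result, but the paper takes a cleaner route at precisely the point you flag as delicate.

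For the cross-kernel estimate the paper does \emph{not} go through the closed form \eqref{K(t,1)} and the shifted polynomials $P_m^{(\mu+1,k+\delta)}$; instead it uses the Christoffel--Darboux formula directly. Writing $p_j=p_j^{(\mu,k+\delta)}$ for the orthonormal Jacobi polynomials, for $t\le1-\eta$ one has
\[
|K_m^{(\mu,k+\delta)}(t,1)|=\frac{\gamma_m}{\gamma_{m+1}}\left|\frac{p_{m+1}(t)p_m(1)-p_m(t)p_{m+1}(1)}{t-1}\right|
\le \frac{C}{\eta}\,K_{m+1}^{(\mu,k+\delta)}(t,t)^{1/2}\sqrt{p_m^2(1)+p_{m+1}^2(1)}.
\]
Combining this with the elementary Stirling bounds $p_m^{(\mu,k+\delta)}(1)^2\le C\,2^{-k}(m+k)^{\mu+1}m^\mu$ and $2^kK_m^{(\mu,k+\delta)}(1,1)\ge C'(m+k)^{\mu+1}m^{\mu+1}$ (both uniform in $k$) gives in one stroke
\[
\frac{2^k\,K_m^{(\mu,k+\delta)}(t,1)^2}{c_\mu^d+\lambda\,2^kK_m^{(\mu,k+\delta)}(1,1)}\le C\,\frac{K_{m+1}^{(\mu,k+\delta)}(t,t)}{m+1},
\]
valid for \emph{every} $m\ge0$. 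Lemma~\ref{le6} then applies unchanged. This is exactly the device that dissolves your worry about the ``coupled regime'': the Jacobi parameters never shift to $\mu+1$, so Lemma~\ref{le6} does all the uniform-in-$k$ work and no separate asymptotic analysis of $P_m^{(\mu+1,k+\delta)}$ is needed.

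With this bound in hand the paper does \emph{not} split the sum at $n-\lfloor\log n\rfloor$. After cancelling $(1+t)^{-k}=(2r^2)^{-k}$ against $2^kr^{2k}$ and using $(k+\delta)\binom{k+d-3}{k}\le C'n^{d-2}$, each summand is $O\bigl(n^{d-1}/(m+1)\bigr)$ times the edge weights, and
\[
\sum_{k=0}^n\frac{1}{\lfloor\tfrac{n-k}{2}\rfloor+1}=O(\log n)
\]
produces the factor $n^{d-1}\log n$ directly. Your split is harmless but unnecessary; note also that your accounting ``$O(1/m)=O(1/n)$'' on the main block is not right, since there $m$ ranges down to roughly $\tfrac12\log n$, so that block already contributes $O(n^{d-1}\log n)$ rather than $O(n^{d-1})$---this does not affect the final bound, but it means the $\log n$ is not coming only from the tail.

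For the limit \eqref{ass2} the paper invokes Kro\'o--Lubinsky \cite[Theorem~1.3]{KrooLubinsky2013A} rather than \cite{Xu1996}; this supplies the interior asymptotic of $\mathbb{K}_n(x,x)$ uniformly on compacta, which is what is needed. Your treatment of $x=0$ matches the paper's.
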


\begin{proof}

Let us consider $x \in D$, with $D$ a compact subset of $\mathbb{R}^d$.
With $t=2r^{2}-1$ and $r=\|x\|$ we can assume that $t\leqslant  1-\eta$ for some $\eta >0$. 
Then, from Christoffel--Darboux formula and using the
convention $p_{j}=p_{j}^{( \mu ,k+\delta ) }$ for orthonormal Jacobi polynomials, we have
\begin{align}
\left\vert K_{m}^{( \mu ,k+\delta )}( t,1) \right\vert  &=\frac{\gamma _{m}}{
\gamma _{m+1}}\left\vert \frac{p_{m+1}( t) p_{m}( 1)
-p_{m}( t) p_{m+1}( 1) }{t-1}\right\vert  
\nonumber \\
&\leqslant  \frac{C}{2}\frac{\sqrt{p_{m}^{2}( t) +p_{m+1}^{2}(
t) }\sqrt{p_{m}^{2}( 1) +p_{m+1}^{2}( 1) }}{\eta} \nonumber
\\ \label{0cotakm}
&\leqslant  \frac{C}{2\eta }K_{m+1}^{( \mu ,k+\delta )}( t,t) ^{1/2}\sqrt{%
p_{m}^{2}( 1) +p_{m+1}^{2}( 1) },
\end{align}%
where $\gamma_m$ is the leading coefficient of $p_{m}$.

Next, we note that given any real number $a$, there exists $C_{a}>1$ such
that for all $x$ with $\min ( x,x+a) \geqslant1,$
\[
C_{a}^{-1}x^{a}\leqslant  \frac{\Gamma ( x+a) }{\Gamma ( x) }\leqslant  C_{a}x^{a}.
\]
This follows from Stirling's formula and the positivity and continuity of
$\frac{\Gamma ( x+a) }{\Gamma ( x) }$ for this range of $x$. Then
from (\ref{jac-norm}) and \eqref{normJ} we get
$$
\left( p_{m}^{( \mu ,k+\delta ) }( 1) \right)^2
\leqslant   C\frac{( m+k) ^{\mu + 1}\,m^{\mu}}{2^{k}}.
$$
Substituting these bounds into (\ref{0cotakm}) for $m\geqslant 1$ we  have
\begin{align*}
\left( K_{m}^{( \mu ,k+\delta ) }( t,1) \right)^2 &\leqslant  C
K_{m+1}( t,t) \, \frac{(m+k)^{\mu + 1}\,m^{\mu}}{2^{k}}.
\end{align*}
In the same way, using \eqref{K(1,1)}, we deduce
\begin{align*}
2^k \, K_{m}^{( \mu ,k+\delta ) }( 1,1) &\geqslant  C' \, (m+k)^{\mu + 1}\,m^{\mu+1}.
\end{align*}
And therefore we conclude
\begin{align}
\frac{2^{k} \left( K_{m}^{( \mu ,k+\delta ) }(t,1)\right)^2}{c_{\mu}^d + \lambda 2^{k}K_{m}^{( \mu ,k+\delta ) }(1,1)}  &\leqslant  
\frac{C K_{m+1}^{( \mu ,k+\delta ) }(t,t) (m+k)^{\mu+1} m^{\mu}}{c_{\mu}^d + \lambda C' (m+k)^{\mu+1} m^{\mu+1} }
\nonumber \\
&\leqslant C \frac{K_{m+1}^{( \mu ,k+\delta ) }(t,t)}{m+1}.\label{cotafkm}
\end{align}
This bound holds also for $m=0$, thought it is obtained in a simpler way since $K_{0}$ is a constant.

Then, for $ d \geqslant 3$, we have
$$
0 \leqslant \mathbb{K}_{n}( x,x) -\tilde{\mathbb{K}}_{n}( x,x) \leqslant  C\sum_{k=0}^{n}2^{k}( k+\delta )
\binom{k+d-3}{k} r^{2k} \frac{K_{m+1}^{( \mu ,k+\delta ) }( t,t) }{m+1}.
$$
Using the bound \eqref{k+delta} and denoting $t=2r^{2}-1$ and $m=\left\lfloor\frac{n-k}{2}\right\rfloor$, 
Lemma~\ref{le6} gives
\begin{align*}
0 \leqslant \mathbb{K}_{n}( x,x) &-\tilde{\mathbb{K}}_{n}( x,x) \leqslant 
\\
\leqslant &  C \, n^{d-2} \sum_{k=0}^{n}
 \frac{\left\lfloor \frac{n}{2}\right\rfloor+2 }{\left\lfloor \frac{n-k}{2}\right\rfloor+1}
\\
&\times \left( 1-t+\frac{1}{(\left\lfloor \frac{n}{2}\right\rfloor +2)^{2}}\right)^{-\mu -\frac{1}{2}}
\left( 1+t+\frac{1}{(\left\lfloor \frac{n}{2}\right\rfloor +2) ^{2}}\right)^{-\delta -\frac{1}{2}}
\\
\leqslant & C n^{d-1}\left( 1-t+\frac{4}{n^{2}}\right) ^{-\mu -\frac{1}{2}}
\left( 1+t+\frac{4}{n^{2}}\right)^{-\delta -\frac{1}{2}}\sum_{k=0}^{n}
\frac{1}{\left\lfloor \frac{n-k}{2}\right\rfloor +1}
\\
\leqslant  & C n^{d-1}\log n \left( 2(1-r^{2}) +\frac{4}{n^{2}}\right)^{-\mu -\frac{1}{2}}
\left( 2r^{2}+\frac{4}{n^{2}}\right) ^{-\delta -\frac{1}{2}},
\end{align*}
where the last inequality follows from
\begin{align*}
\lim_{n\to\infty} \frac{1}{\log n} \sum_{k=0}^{n}\frac{1}{\left\lfloor \frac{n-k}{2}\right\rfloor +1} = 2.
\end{align*}
Obviously the above inequality holds also in the case $d=2$.

Finally,  \cite[Theorem 1.3]{KrooLubinsky2013A} { shows}
\begin{align*}
\lim_{n\rightarrow \infty }\mathbb{K}_{n}\left( x,x\right) /\binom{n+d}{d}
=& \frac{\omega _{\mu }W_{0}( x) }{\left( 1-\left\Vert
x\right\Vert ^{2}\right) ^{\mu }}\\
=&\frac{1}{\sqrt{\pi }}
\frac{\Gamma (\mu +1)
\Gamma ( \frac{d+1}{2}) }{\Gamma ( \mu +\frac{d}{2}+1) }
\left( 1-\left\Vert x\right\Vert ^{2}\right) ^{-\frac{1}{2}-\mu },
\end{align*}
uniformly for $x$ in compact subsets of the unit ball.  {Consequently}  $\mathbb{K}_{n}(x,x) $
grows like $n^{d}>>n^{d-1}\log n$,   { and clearly} (\ref{ass2}) follows.

Finally, in the case $x=0$, that is $r=0$, all the
terms in $\mathbb{K}_{n}(0,0) -\tilde{\mathbb{K}}_{n}(0,0)$ 
vanish except for $k=0$. If we write $m = \left\lfloor \frac{n}{2}\right\rfloor$
we get
\begin{align*}
\mathbb{K}_{n}(0,0) -\tilde{\mathbb{K}}_{n}(0,0) &= \frac{\lambda \left(K_m^{( \mu ,\delta ) }(-1,1)\right)^2}{c_{\mu}^d + \lambda K_m^{( \mu ,\delta ) }(1,1)} \\
& \leqslant \frac{C K_m^{( \mu ,\delta ) }(-1,-1) m^{2\mu+1}}{c_{\mu}^d + C' m^{2\mu+2}} 
\leqslant \frac{K_{m+1}^{( \mu ,\delta ) }(-1,-1)}{m+1}.
\end{align*}
Next,  Lemma~\ref{le6} {implies}
\begin{align*}
\mathbb{K}_{n}(0,0) -\tilde{\mathbb{K}}_{n}(0,0) \leqslant &  C \,  
\left( 2 +\frac{1}{(\left\lfloor \frac{n}{2}\right\rfloor +2)^{2}}\right)^{-\mu -\frac{1}{2}}
\left( \frac{1}{(\left\lfloor \frac{n}{2}\right\rfloor +2) ^{2}}\right)^{-\delta -\frac{1}{2}}
\\
\leqslant & \, C \, \left(\left\lfloor \frac{n}{2}\right\rfloor +2\right)^{2 \delta + 1} 
\leqslant \, C \, n^{d-1}
\end{align*}
and, therefore, \eqref{ass2} follows also in this case.
\end{proof}

\bigskip

\section*{Acknowledgements}

The authors thank MINECO of Spain and the European Regional Development Fund (ERDF) through grant
MTM2014--53171--P, and Jun\-ta de Andaluc\'{\i}a grant P11--FQM--7276 and research group FQM--384.

\end{document}